\definecolor{wineRed}{rgb}{0.7,0,0.3}
\definecolor{grandBleu}{rgb}{0,0,0.8}
\definecolor{darkGreen}{rgb}{0,0.4,0}
\definecolor{blueViolet}{rgb}{0.4,0,1.0}
\definecolor{bloodOrange}{rgb}{0.85,0.05,0}
\definecolor{mycolor}{rgb}{0.8,0,0.2}
\DeclareMathAlphabet{\mathpzc}{OT1}{pzc}{m}{it}
\numberwithin{equation}{section}
\theoremstyle{plain}
\newtheorem{mainThm}{Main Theorem} 
\newtheorem{lemma}{Lemma}[section]
\newtheorem{theorem}{Theorem}
\theoremstyle{definition}
\newtheorem{definition}{Definition}
\newtheorem{rem}{Remark}
\newtheorem{notn}{Notation}
\newtheorem{ex}{Example}
\def\N{\mathbb{N}}
\def\R{\mathbb{R}}
\def\ts{\textstyle}
\begin{document}
\label{page:t}
\thispagestyle{plain}

\title{
A GRADIENT SYSTEM BASED ON ANISOTROPIC \\[-0.5ex] MONOCHROME IMAGE PROCESSING WITH \\[-0.5ex] ORIENTATION AUTO-ADJUSTMENT 
\vspace{-4ex}
}
\author{Harbir Antil
}
\affiliation{~~\\[-4ex] Department of Mathematical Sciences and \\[-0.5ex] the Center for Mathematics and Artificial Intelligence (CMAI), 
\\
George Mason University, Fairfax, VA 22030, USA\\[-0.5ex]}
\email{hantil@gmu.edu}

\sauthor{~~\\[-4ex]{\sc Daiki Mizuno}
}
\saffiliation{~~\\[-5ex] Division of Mathematics and Informatics, \\[-0.5ex] Graduate School of Science and Engineering, Chiba University, \\[-0.5ex] 1-33, Yayoi-cho, Inage-ku, 263-8522, Chiba, Japan\\[-0.5ex]}
\semail{d-mizuno@chiba-u.jp}
\tauthor{~~\\[-4ex]{\sc Ken Shirakawa}
}
\taffiliation{~~\\[-5ex] Department of Mathematics, Faculty of Education, Chiba University \\[-0.5ex] 1--33 Yayoi-cho, Inage-ku, 263--8522, Chiba, Japan\\[-0.5ex]}
\temail{sirakawa@faculty.chiba-u.jp}

\fauthor{~~\\[-3.5ex]{\sc Naotaka Ukai}
}
\faffiliation{~~\\[-5ex] Division of Mathematics and Informatics, \\[-0.5ex] Graduate School of Science and Engineering, Chiba University, \\[-0.5ex] 1-33, Yayoi-cho, Inage-ku, 263-8522, Chiba, Japan\\[-0.5ex]}
\femail{24wd0101@student.gs.chiba-u.jp}
\vspace{-1.75ex}

\footcomment{
AMS Subject Classification: 
35K70, 
35K59, 
35K61, 
35J62.
\\
Keywords: pseudo-parabolic total variation flow, well-posedness of initial-boundary value problem, regularity of solution
}
\maketitle
\vspace{-2ex}

\noindent
{\bf Abstract.}
This paper is devoted to the mathematical analysis of a system of pseudo-parabolic partial differential equations governed by an energy functional, associated with anisotropic monochrome image processing. The energy functional is based on the one proposed by [Berkels et al. \emph{Cartoon extraction based on anisotropic image classification}, SFB 611, 2006], which incorporates an orientation auto-adjustment mechanism, and our energy is a simplified version which reduces the order of derivatives to improve computational efficiency. The aim of this paper is to establish a stable minimization process that addresses some instability of the algorithm caused by the reduction of derivative order. As a part of the study, we here prove Main Theorems concerned with the well-posedness of the pseudo-parabolic system, and provide a mathematically rigorous guarantee for the stability of the image denoising process derived from our system.
\pagebreak

\section*{Introduction}

Let $ 0 < T < \infty $ be a fixed constant, let $ \Omega \subset \R^2 $ be a fixed two-dimensional domain with a Lipschitz boundary $ \Gamma := \partial \Omega $, and let $ Q := (0, T) \times \Omega $ and $ \Sigma := (0, T) \times \Gamma $ be product domains of the time-space variable $(t, x)$. Let $ \kappa $, $ \mu $, {$ \nu $, and $\lambda$} are fixed positive constants, and let $ p > 2 $ is a fixed constant.

In this paper, we consider the following system of pseudo-parabolic PDEs, denoted by (S).

\begin{align}
    \mbox{(S):} ~~~~&
    \nonumber
    \\
    & 
    \begin{cases}\label{alpha00001}
        \partial_t \alpha -\mathit{\Delta} \bigl( \alpha +\kappa \partial_t \alpha \bigr) +\partial \gamma(R(\alpha)\nabla u) \cdot R(\alpha +{\ts \frac{\pi}{2}}) \nabla u \ni 0 \mbox{ in $ Q $,}
        \\[1ex]
        \alpha  = \partial_t \alpha = 0 \mbox{ on $ \Sigma $, ~~ $ \alpha(0, x) = \alpha_0(x) $, $ x\in \Omega  $,}
    \end{cases}
    \\
    & 
    \begin{cases}\label{u00001}
        \partial_t u -\mathrm{div} \bigl( \, {^\top R} (\alpha) \partial \gamma(R(\alpha) \nabla u) +\nu |\nabla u|^{p -2} \nabla u +\mu \nabla \partial_t u \, \bigr)
        \\
        \hspace{4ex} +\lambda(u -u_\mathrm{org}) \ni 0 \mbox{ in $ Q  $,}
        \\[1ex]
        u =  \partial_t u = 0 \mbox{ on $ \Sigma $, ~~ $ u(0, x) = u_0(x) $, $ x\in \Omega  $.}
    \end{cases}
\end{align}
This system {is a gradient flow of the following energy-functional:}
\begin{gather}
    E: [u, \alpha] \in W^{1, p}_0(\Omega) \times H_0^1(\Omega) \mapsto E(u, \alpha) := \frac{1}{2} \int_\Omega |\nabla \alpha|^2 \, dx +\frac{\nu}{p}\int_\Omega |\nabla u|^p \, dx 
    \nonumber
    \\
+\int_\Omega \gamma(R(\alpha) \nabla u) \, dx +\frac{\lambda}{2}\int_\Omega |u -u_\mathrm{org}|^2 \, dx \in [0, \infty).
    \label{energy01}
\end{gather}
{Hence, from practical viewpoint, it will correspond to a continuum limit of a discrete minimization scheme.
The energy $ E $ given in \eqref{energy01}} is a modified version of a monochrome image-denoising problem, developed by Berkels et al. \cite{berkels2006cartoon}, given as: 
{
\begin{gather}
    [u, \alpha] \in W_0^{1,p}(\Omega) \times \bigl(H^2(\Omega) \cap H^{1}_0(\Omega) \bigr) \mapsto E(u, \alpha) +\frac{1}{2} \int_\Omega |\mathit{\Delta} \alpha|^2 \, dx, 
    \nonumber
    \\
    \mbox{under $ p = 2 $.}
    \label{energy02}
\end{gather}
}
The original work \cite{berkels2006cartoon} focuses on special images, such as satellite photos of cityscapes, in which numerous unique polygonal (rectangular) patterns appear. In this context, $ \Omega $ is the spatial domain of the monochrome image region. {The unknown $ u $ is the order parameter which denotes the degree of gray-scale in $ \Omega $. Moreover, $ \alpha $ is an order parameter which helps to accurately capture the} focused polygonal patterns, together with a fixed function of anisotropy $ \gamma $ defined on $ \R^2 $. Indeed, the anisotropic function $\gamma$ is set as a non-Euclidean norm, so that the focused polygon matches the shape of its closed ball (Wulff shape), and the order parameter $\alpha$ is set as the data of angles, representing the orientation of the polygonal patterns distributed in $\Omega$.  

The noteworthy idea in  \cite{berkels2006cartoon} is to adjust the orientation of polygonal patterns, by using the unknown $ \alpha $ and the function of anisotropy $ \gamma $, in addition to the conventional order parameter $ u $ of gray-scale. From the perspective of accuracy improvement, the effectiveness of this idea is confirmed {by numerical results in} \cite{berkels2006cartoon}. 

In the meantime, from \eqref{energy01} and \eqref{energy02}, we can see that \cite{berkels2006cartoon} requires the potential of double-Laplacian $(-{\mit \Delta})^2 \alpha$ for the angle $ \alpha $. This extra potential {is meant to stabilize the minimization process, especially the complex interaction due to} $ \int_\Omega \gamma(R(\alpha)\nabla u) \, dx $ in \eqref{energy01}. {However, such} higher-order term would lead to a decrease in computational efficiency, with multiple boundary conditions. 

In view of this, we here focus on the reduced energy $ E = E(u, \alpha) $ given in \eqref{energy01}, and derive the system (S) as the following gradient system of pseudo-parabolic type:
\begin{gather*}
    -\left[ \begin{matrix}
        I +\kappa (-{\mit \Delta}_0) & O
        \\[0ex]
        O & I +\mu (-{\mit \Delta}_0)
    \end{matrix} \right] \frac{d}{dt} \left[ \begin{matrix}
        u(t) \\ \alpha(t)
    \end{matrix} \right] = \frac{\delta}{\delta [u, \alpha]} E(u(t), \alpha(t)) 
    \\[1ex]
    \mbox{in $ [L^2(\Omega)]^2 $, ~for $ t \in (0, T) $,}
\end{gather*}
with the functional derivative $ \frac{\delta}{\delta [u, \alpha]} E $, and the maximal monotone operator $ -{\mit \Delta}_0 $ of Laplacian under Dirichlet-zero boundary condition. {Such a gradient flow has been recently used by the authors in the context of material science problems recently in \cite{HAntil_DMizuno_KShirakawa_2024a}.}

From the derivation method, it could be expected that the solution to the system (S) realizes the energy-dissipation of $ E = E(u, \alpha) $. Also, the reduction of double Laplacian $ (-{\mit \Delta})^2 \alpha $ is expected to lead to the reduction of the order of spatial-derivative. Indeed, at the level of time-discretization, we will see that the scheme is formulated as a system of elliptic boundary value problems with (single) Dirichlet-zero boundary condition (see Section 3). Furthermore, since the system (S) corresponds to the continuous limit of discrete algorithms, its mathematical analysis {will provide} valuable information in the stabilization of numerical algorithm, with respect to parameter-choices such as time-step-size, spatial mesh-size, and so on.

Based on these, we set the goal to prove the following Main Theorems. 
\begin{description}
    \item[Main Theorem 1:]Existence and energy-dissipation for the system (S).
        \vspace{-1ex}
    \item[Main Theorem 2:]Uniqueness and continuous dependence when $ \gamma \in C^{1, 1}(\R^2) \cap C^2(\R^2) $.
\end{description}
        The first Main Theorem 1 will be proved by means of the time-discretization method. Since the time-discretization scheme has the uniqueness, the lack of uniqueness result in Main Theorem 1 would not be so crucial in practical applications. Meanwhile, the second Main Theorem 2 implies that we will guarantee more stable property of minimizing algorithm if we set the anisotropy $ \gamma $ sufficiently smooth. In any result, it should be noted that the nonlinear diffusion $ -\mu \, \mathrm{div} \bigl(|\nabla u|^{p -2} \nabla u \bigr) $ might bring some negative effect on computational efficiency. This issue {will be} one of the significant interests for future research, as well as the effect of the pseudo-parabolic nature of our system (S).

        {
\begin{rem}\label{exRem.01}
    Throughout this paper, the setting $ p > 2 $ is crucial in the derivation of:
    \begin{description}
        \item[$\sharp\,1)$] the energy-dissipation for the time-discretization scheme, involving the 2nd order elliptic systems, as used in the proof of Main Theorem 1;
        \item[$\sharp\,2)$]the Gronwall-type inequality used in the proof of Main Theorem 2. 
    \end{description}
    In this light, the setting $ p = 2 $ as in \eqref{energy02} would represent a critical condition obtaining the results stated in $\sharp\,1)$ and $\sharp\,2)$.
\end{rem}
}

\noindent
\textbf{Outline.} Section 1 provides the preliminaries, and based on this, the Main Theorems are stated in Section 2.  For the proofs of Main Theorems, Section 3 sets up a time-discretization scheme for (S), and prepares some auxiliary results. Finally, utilizing the auxiliary results, the Main Theorems 1 and 2 are proved in {Sections} 4 and 5, respectively.

\section{Preliminaries}
First, let us explain the notations used throughout.
\begin{notn}[\bf{Real analysis}]\label{notn1}We define:
  \[a \lor b : = \max\{ a , b \} \mbox{ and } a \land b : = \min\{ a , b \} , \mbox{ for all } a , b \in [-\infty, \infty],\]
  and especially, we note:
  \[[a]^+:= a \lor 0 \mbox{ and } [a]^-:= -(a \land 0), \mbox{ for all } a \in [-\infty,\infty].\]

Let $d \in \N$ be fixed dimension. We denote by $ | x | $ and $ x \cdot y $ the Euclidean norm of $ x \in \R^d$ and the standard scalar product of $ x , y \in \R^d$, respectively, i.e.,
\begin{gather*}
  | x | : = \sqrt{x_1^2 + \cdots + x_d^2} \quad \mbox{and} \quad x \cdot y := x_1 y_1 + \cdots + x_d y_d,
  \\
  \mbox{ for all } x = [x_1 , \dots , x_d], \, y = [y_1 , \dots , y_d] \in \R^d.
\end{gather*}
\end{notn}
Additionally, we note the following elementary fact:
\begin{description}
  \item[\textbf{(Fact 1)}](cf.\cite[Proposition 1.80]{MR1857292}) Let $ m \in \N $ be fixed. If $ \{A_k\}_{k=1}^m \subset \R $ and $ \{ a_n^k \}_{n\in\N}\subset\R $, 
  $ k = 1, \dots , m $ satisfies that:
  \[ \liminf_{ n \rightarrow \infty } a^k_n \geq A_k, \mbox{ for } k = 1, \dots , m, \mbox{ and }\limsup_{ n \rightarrow \infty} \sum_{k=1}^{m} a_n^k \leq \sum_{k=1}^{m} A_k. \]
  Then, $ \lim_{n \rightarrow \infty } a_n^k = A_k $, for $ k = 1 , \dots , m $. 
\end{description}
\begin{notn}[\bf{Abstract functional analysis}]\label{notn2}
For an abstract Banach space $ X $, we denote by $| \cdot |_X$ the norm of $ X $, and denote by $ \langle \cdot , \cdot \rangle_X $ the duality pairing between $ X $ and its dual $ X^* $. In particular, when $ X $ is a Hilbert space, we denote by $( \cdot , \cdot )_X$ its inner product. 

For Banach spaces $ X_1 , \dots ,X_d $ with $ 1 < d \in \N$, let $ X_1 \times \dots \times X_d $ be the product Banach space which has the norm 
\[ | \cdot |_{ X_1\times \dots \times X_d } : = | \cdot |_{X_1} + \dots + | \cdot |_{X_d} .\] 
However, when all $ X_1 , \dots ,X_d $ are Hilbert spaces, $ X_1 \times \dots \times X_d $ denotes the product Hilbert space which has the inner product 
\[ ( \cdot, \cdot )_{ X_1 \times \dots \times X_d} := ( \cdot , \cdot )_{X_1} + \dots + ( \cdot , \cdot )_{X_d} ,\]
and the norm 
\[ | \cdot |_{ X_1 \times \dots \times X_d} := \left( | \cdot |_{X_1}^2 + \dots + | \cdot |_{X_d}^2 \right)^\frac{1}{2} .\] 
In particular, when all $ X_1 , \dots ,X_d $ coincide with a Banach space $ Y $, the product space $ X_1 \times \dots \times X_d $ is simply denoted by $[Y]^d$.
\end{notn}

\begin{notn}[\bf{Convex analysis}]\label{notn3}
For any proper lower semi-continuous (denoted by l.s.c., hereafter) and convex function $ \Psi : X \rightarrow (-\infty,\infty]$ on a Hilbert space $ X $, we denote by $ D( \Psi ) $ the effective domain of $ \Psi $, and denote by $ \partial \Psi $ the subdifferential of $ \Psi $. The subdifferential $ \partial \Psi $ is a set-valued map corresponding to a weak differential of $ \Psi $, and it is known as a maximal monotone graph in the product space $ X \times X $. More precisely, for each $ x_0 \in X $, the value $ \partial \Psi( x_0 ) $ is defined as the set of all elements $ x_0^* \in X $ that satisfy the variational inequality
\[( x_0^*, x - x_0 )_X \leq \Psi (x) - \Psi (x_0), \mbox{ for any } x \in D ( \Psi ), \]
and the set $ D ( \partial \Psi ) := \{ x \in X \,|\, \partial \Psi (x) \neq \emptyset \}$ is called the domain of $ \partial \Psi $. We often use the notation $`` [x_0, x_0^*] \in \partial \Psi $ in $ X \times X "$ to mean that $`` x_0^* \in \partial \Psi ( x_0 ) $ in $ X $ for $ x_0 \in D ( \partial \Psi ) "$, by identifying the operator $ \partial \Psi $ with its graph in $ X \times X $.
\end{notn}
\begin{ex}\label{ex1}
  Let $\gamma: \R^2 \longrightarrow [0,+\infty)$ be a convex function such that it belongs to $C^{0,1}(\R^2)$, $\gamma(-w)= \gamma(w) \mbox{ for any $ w \in \R^2 $}$, and the origin $ 0 \in \R^2 $ is the unique minimizer of $ \gamma $. Also, the function $\{\gamma_\varepsilon\}_{\varepsilon\geq0}$ is defined as follows: 
  \begin{equation*}
    \gamma_\varepsilon :=\left\{
      \begin{aligned}
        &\gamma, &&\mbox{ if }\varepsilon=0,
        \\
        &\rho_\varepsilon * \gamma, &&\mbox{ otherwise},
      \end{aligned}\right.
  \end{equation*}
  where $\rho_\varepsilon$ is the standard mollifier. Then, the following two items hold.
  \begin{description}
    \item[(I)] Let $\{\Phi_\varepsilon\}_{\varepsilon\geq0}$ be a sequence of functionals on $[L^2(\Omega)]^2$, defined as:
  \begin{align*}
    &\Phi_\varepsilon:\bm{w}\in[L^2(\Omega)]^2\mapsto\Phi_\varepsilon(\bm{w}):=\int_\Omega\gamma_\varepsilon(\bm{w})\,dx\in[0,\infty).
  \end{align*}
  Then, for every $\varepsilon\in[0,\infty)$, $\Phi_\varepsilon$ is the proper, l.s.c., and convex function, such that 
  \[D(\Phi_\varepsilon)=D(\partial\Phi_\varepsilon)=[L^2(\Omega)]^2,\]
  and 
  \begin{equation*}
    \partial\Phi_\varepsilon(\bm{w}):=
    \left\{
      \begin{aligned}
        &\{\nabla\gamma_\varepsilon(\bm{w})\}, \mbox{ if }\varepsilon>0,
        \\
        &\{\bm{w}^*\in[L^2(\Omega)]^2~|~\bm{w}^*\in\partial\gamma(\bm{w})\mbox{ a.e. in } \Omega,\}, \mbox{ if }\varepsilon=0,
      \end{aligned}
    \right.
  \end{equation*}
  \[\mbox{ in }[L^2(\Omega)]^2, \mbox{ for any }\bm{w}\in[L^2(\Omega)]^2.\]
    \item[(II)] Let any open interval $I\subset(0,T)$, and let $\{\widehat{\Phi}_\varepsilon^I\}_{\varepsilon\geq0}$ be a sequence of functionals on $L^2(I;[L^2(\Omega)]^2)\,(=[L^2(I;L^2(\Omega))]^2)$, defined as:
  \begin{align*}
    &\widehat{\Phi}_\varepsilon^I:\bm{w}\in L^2(I;[L^2(\Omega)]^2)\mapsto\widehat{\Phi}_\varepsilon^I(\bm{w}):=\int_I\Phi_\varepsilon(\bm{w}(t))\,dt\in[0,\infty).
  \end{align*}
  Then, for every $\varepsilon\in[0,\infty)$, $\widehat{\Phi}^I_\varepsilon$ is the proper, l.s.c., and convex function, such that 
  \[D(\widehat{\Phi}_\varepsilon^I)=D(\partial\widehat{\Phi}_\varepsilon^I)=L^2(I;[L^2(\Omega)]^2),\]
  and 
  \begin{align*}
    \partial\widehat{\Phi}_\varepsilon^I(\bm{w})
    &=\{\tilde{\bm{w}}^*\in L^2(I;[L^2(\Omega)]^2)~|~\tilde{\bm{w}}^*(t)\in \partial \Phi_\varepsilon(\bm{w}(t))\mbox{ in }[L^2(\Omega)]^2, \mbox{ a.e. }t\in I\}
    \\
    &=
    \left\{
      \begin{aligned}
        &\{\nabla\gamma_\varepsilon(\bm{w})\}, \mbox{ if }\varepsilon>0,
        \\
        &\{\bm{w}^*\in L^2(I;[L^2(\Omega)]^2)~|~\bm{w}^*\in\partial\gamma(\bm{w})\mbox{ a.e. in } I\times \Omega,\}, \mbox{ if }\varepsilon=0,
      \end{aligned}
    \right.
  \end{align*}
  \[\mbox{ in }L^2(I;[L^2(\Omega)]^2), \mbox{ for any }\bm{w}\in L^2(I;[L^2(\Omega)]^2).\]
  \end{description}
\end{ex}
\begin{definition}[\bf{Mosco-convergence}: cf.\cite{MR0298508}]\label{dfnmosco}
    Let $ X $ be a Hilbert space. Let $ \Psi : X \rightarrow ( -\infty , \infty ] $ be a proper, l.s.c., and convex function, and let $ \{ \Psi_n \}_{ n \in \N } $ be a sequence of proper, l.s.c., and convex functions $ \Psi_n : X \rightarrow ( -\infty , \infty ] $, $ n \in \N $. Then, we say that $ \Psi_n \to  \Psi $ on $ X $ in the sense of Mosco, iff. the following two conditions are fulfilled:
  \begin{description}
    \item[(M1) (Optimality)] For any $w_0 \in D ( \Psi )$, there exists a sequence $ \{w_n\}_{ n \in \N } \subset X $ such that $ w_n \rightarrow w_0 $ in $ X $ and $ \Psi_n ( w_n ) \rightarrow \Psi ( w_0 ) $ as $ n \rightarrow \infty $, 
    \item[(M2) (Lower-bound)] $\liminf_{ n \rightarrow \infty } \Psi_n ( w_n ) \geq \Psi ( w_0 )$ if $w_0 \in X, \{ w_n \}_{ n \in \N } \subset X $, and $w_n \rightarrow w_0 $ weakly in $ X $ as $ n \rightarrow \infty $.
  \end{description}
\end{definition}
\begin{definition}[\bf{$ \Gamma $-convergence}; cf.\cite{MR1201152}]\label{dfngamma}
  Let $ X $ be a Hilbert space. Let $ \Psi : X \rightarrow ( -\infty , \infty ] $ be a proper and l.s.c. function, and let $ \{ \Psi_n \}_{ n \in \N } $ be a sequence of proper and l.s.c. functions $ \Psi_n : X \rightarrow ( -\infty , \infty ] $, $ n \in \N $. Then, we say that {$ \Psi_n \to \Psi $} on $ X $ in the sense of $ \Gamma $-convergence, iff. the following two conditions are fulfilled:
  \begin{description}
      \item[{\boldmath ($\Gamma$1) (Optimality)}] For any $w_0 \in D ( \Psi )$, there exists a sequence $ \{w_n\}_{ n \in \N } \subset X $ such that $ w_n \rightarrow w_0 $ in $ X $ and $ \Psi_n ( w_n ) \rightarrow \Psi ( w_0 ) $ as $ n \rightarrow \infty $, 
      \item[{\boldmath ($\Gamma$2) (Lower-bound)}] $\liminf_{ n \rightarrow \infty } \Psi_n ( w_n ) \geq \Psi ( w_0 )$ if $w_0 \in X, \{ w_n \}_{ n \in \N } \subset X $, and $w_n \rightarrow w_0 $ in $ X $ as $ n \rightarrow \infty $.
  \end{description}
\end{definition}

\begin{rem}\label{rem2}
  We note that under the condition of convexity of functionals, Mosco convergence implies $ \Gamma $-convergence, i.e., the  $ \Gamma $-convergence of convex functions can be regarded as a weak version of Mosco convergence. Furthermore,  in the $ \Gamma $-convergence of convex functions, we can see the following:
\begin{description}
  \item[(Fact 2)] (cf.\cite[Theorem 3.66]{MR0773850} and \cite[Chapter 2]{Kenmochi81}) Let $ X $ be a Hilbert space. Let $ \Psi : X \rightarrow ( -\infty , \infty ] $ and $ \Psi_n : X \rightarrow ( -\infty , \infty ] $, $ n \in \N $, be proper, l.s.c., and convex functions on a Hilbert space $ X $ such that $ \Psi_n \rightarrow \Psi $ on $ X $, in the sense of $ \Gamma $-convergence, as $ n \rightarrow \infty $. Let us assume that
  \begin{equation*}
    \left\{
    \begin{aligned}
      &[z, z^*] \in X \times X ,~ [z_n, z_n^*] \in \partial \Psi_n \mbox{ in } X \times X,~n \in \N,
      \\
      &z_n \rightarrow z^* \mbox{ in } X \mbox{ and } z_n^* \rightarrow z^* \mbox{ weakly in } X \mbox{ as } n \rightarrow \infty.  
    \end{aligned}
    \right.
  \end{equation*}
  Then, it holds that:
  \[ [ z , z^* ] \in \partial \Psi \mbox{ in } X \times X , \mbox{ and } \Psi_n ( z_n ) \rightarrow \Psi ( z ) \mbox{ as } n \rightarrow \infty. \]
  \item[(Fact 3)](cf.\cite[Lemma 4.1]{MR3661429} and \cite[Appendix]{MR2096945}) Let $ X $ be a Hilbert space, $ d \in \N $ be dimension constant, and $ A \subset \R^d $ be a bounded open set. Let $ \Psi : X \rightarrow ( -\infty , \infty ] $ and $ \Psi_n : X \rightarrow ( -\infty , \infty ] $, $ n \in \N $, be proper, l.s.c., and convex functions on a Hilbert space $ X $ such that $ \Psi_n \rightarrow \Psi $ on $ X $, in the sense of $ \Gamma $-convergence, as $ n \rightarrow \infty $. Then, a sequence $ \{ \widehat{ \Psi }_n^A \}_{ n \in \N } $ of proper, l.s.c., and convex functions on $ L^2 ( A ; X ) $, defined as:
  \begin{equation*}
    z \in L^2 ( A ; X ) \mapsto \widehat{ \Psi }^A_n ( z ) : = \left\{
      \begin{aligned}
        & \int_A \Psi _n ( z ( t ) ) \,dt, \mbox{ if } \Psi_n ( z ) \in L^1 ( A ), 
        \\
        & \infty, \mbox{ otherwise}, 
      \end{aligned}
    \right.\mbox{ for }n \in \N;
  \end{equation*}
  converges to a proper, l.s.c., and convex function $ \widehat{ \Psi }^A $ on $ L^2 ( A ; X ) $, defined as:
  \begin{equation*}
    z \in L^2 ( A ; X ) \mapsto \widehat{ \Psi }^A ( z ) : = \left\{
      \begin{aligned}
        & \int_A \Psi ( z ( t ) ) \,dt, \mbox{ if } \Psi ( z ) \in L^1 ( A ), 
        \\
        & \infty, \mbox{ otherwise}, 
      \end{aligned}
    \right.
  \end{equation*}
  on $ L^2 ( A ; X ) $, in the sense of $ \Gamma $-convergence, as $ n \rightarrow \infty $.
\end{description}
\end{rem}
\begin{ex}[Examples of Mosco-convergence]\label{ex2}
  Let $\varepsilon_0\geq0$ be arbitrary fixed constant, and let $\gamma$ and $\{\gamma_\varepsilon\}_{\varepsilon\geq0}$ be as in Example \ref{ex1}, respectively. Then, the following three items hold.
  \begin{description}
    \item[(I)] $\gamma_\varepsilon\rightarrow\gamma_{\varepsilon_0}$ on $\R^2$, in the sense of Mosco, as $\varepsilon\rightarrow\varepsilon_0$.
    \item[(II)] Let $\{\Phi_\varepsilon\}_{\varepsilon\geq0}$ be the sequence of proper, l.s.c., and convex functions on $[L^2(\Omega)]^2$, as in Example \ref{ex1}(I). Then,
    \begin{align}
    \Phi_\varepsilon\rightarrow\Phi_{\varepsilon_0} \mbox{ on }[L^2(\Omega)]^2 \mbox{ in the sense of Mosco, as }\varepsilon\rightarrow\varepsilon_0.
    \end{align} 
    \item[(III)] Let $I\subset(0,T)$ be an open interval, and let $\{\widehat{\Phi}_{\varepsilon}^I\}_{\varepsilon\geq0}$ be the sequence of proper, l.s.c., and convex functions on $L^2(I;[L^2(\Omega)]^2)$, as in Example \ref{ex1}(II). Then, 
    \begin{align}
    \widehat{\Phi}^I_\varepsilon\rightarrow\widehat{\Phi}^I_{\varepsilon_0} \mbox{ on }L^2(I;[L^2(\Omega)]^2), \mbox{ in the sense of Mosco, as }\varepsilon\rightarrow\varepsilon_0.
  \end{align}
  \end{description}
\end{ex}
\begin{notn}\label{deftseq}
  Let $ \tau >0 $ be a constant of time-step size, and $\{t_i\}_{i=0}^\infty$ be a time-sequence defined as 
  \[t_i := i\tau, \mbox{ for any } i = 0, 1, 2, \dots . \]
  Let $ X $ be a Banach space. For any sequence $ \{ [t_i , u_i] \}_{i=0}^\infty \subset [0,\infty) \times X $, we define three types of interpolations: $ [ \overline{u} ]_\tau \in L^\infty_{\mathrm{loc}}([0,\infty);X) $, $ [ \underline{u} ]_\tau \in L^\infty_{\mathrm{loc}}([0,\infty);X) $, and $ [ u ]_\tau \in W^{1,2}_{\mathrm{loc}}([0,\infty);X) $, as follows:
\begin{equation*}
  \left\{
  \begin{aligned}
    &[\overline{u}]_\tau(t):=\chi_{(-\infty,0]}u_{0}+\sum_{i=1}^{\infty}\chi_{(t_{i-1},t_i]}(t)u_{i},
    \\
    &[\underline{u}]_\tau(t):=\sum_{i=0}^{\infty}\chi_{(t_i,t_{i+1}]}(t)u_{i},\hspace*{25ex}\mbox{in }X,\mbox{ for any } t\geq0,
    \\
    &[u]_\tau(t):=\sum_{i=1}^{\infty}\chi_{(t_{i-1},t_i]}(t)\biggl(\frac{t-t_{i-1}}{\tau}u_{i}+
    \frac{t_i-t}{\tau}u_{i-1}\biggr),
  \end{aligned}
  \right.
\end{equation*}
where $ \chi_E : \R \rightarrow \{0,1\} $ denotes the characteristic function of the set $ E \subset \R $.
\end{notn}

\section{Main results}
\label{sec:main}

In this paper, the Main Theorems are discussed under the following assumptions.

\begin{itemize}
  \item[(A0)] $p > 2$, $\kappa > 0$, $\nu > 0$, $\mu > 0$, and $\lambda > 0 $ are fixed constants.
    \vspace{1ex}
  \item[(A1)] $ u_{\mathrm{org}} \in L^2 ( \Omega ) $ is a fixed function such that $ 0 \leq u_\mathrm{org} \leq 1 $ a.e. in $ \Omega $. 
  \item[(A2)] $\gamma: \R^2 \longrightarrow [0,+\infty)$ is a fixed convex function such that:
      \begin{gather*}    
          \gamma(-w)= \gamma(w) \mbox{ for any $ w \in \R^2 $, }\operatorname*{arg\,min}_{w \in \mathbb{R}^2} \gamma(w) = \{0\}, \mbox{ and } \nabla \gamma \in L^\infty(\R^2; \R^2),
      \end{gather*}
        i.e., $ \gamma $ is Lipschitz continuous, and the origin $ 0 \in \R^2 $ is the unique minimizer of $ \gamma $.  
  \item[(A3)] $[u_0, \alpha_0]$ is a fixed initial data such that  $[u_0,\alpha_0]\in W^{1,p}_0(\Omega)\times H^1_0(\Omega)$, and $0\leq u_0 \leq1$ a.e. in $\Omega$.
\end{itemize}

Next, let us give the definition of the solution to the system (S).
\medskip

\begin{definition}
  A pair of functions $[u,\alpha]\in [L^2(0,T;L^2(\Omega))]^2$ is called a solution to the system (S) if and only if $[u,\alpha]$ fulfills the following conditions:
  \begin{itemize}
      \item[(S0)] $[u, \alpha] \in [W^{1,2}(0,T;H^1_0(\Omega))\cap L^\infty(0,T;W^{1,p}_0(\Omega))] \times W^{1,2}(0,T; H^1_0(\Omega))$, and {$0 \leq u\leq 1 $ in $\overline{Q}$}.
      \vspace{1ex}
    \item[(S1)] $u$ solves the following variational inequality:
      \begin{gather*}
        (\partial_tu(t),u(t)-\psi)_{L^2(\Omega)}+\lambda(u(t)-u_\mathrm{org},u(t)-\psi)_{L^2(\Omega)}
        \\
        +\mu(\nabla\partial_tu(t),\nabla(u(t)-\psi))_{[L^2(\Omega)]^2}
        +\nu\int_{\Omega}|\nabla u(t)|^{p-2}\nabla u(t)\cdot\nabla(u(t)-\psi)\,dx
        \\
          +\int_{\Omega}\gamma(R(\alpha(t))\nabla u(t))\,dx
        \leq\int_{\Omega}\gamma (R(\alpha(t))\nabla\psi)\,dx,
        \\
        \mbox{ for any }\psi\in W^{1,p}_0(\Omega),\mbox{ and a.e. }t\in(0,T).  
      \end{gather*}
      \vspace{-3ex}
  \item[(S2)] There exists a vector-value function $ \bm{w}^* \in L^2(0, T; [L^2(\Omega)]^2) $ such that:
      \begin{gather*}
        \bm{w}^*\in\partial\gamma(R(\alpha)\nabla u)\mbox{ in }\R^2, ~\mbox{ a.e. in $ Q $,}
      \end{gather*} 
      and $\alpha$ solves the following variational identity: 
      \begin{gather*}
        (\partial_t\alpha(t),\varphi)_{L^2(\Omega)}+(\nabla\alpha(t),\nabla\varphi)_{[L^2(\Omega)]^2}
        +\kappa(\nabla\partial_t\alpha(t),\nabla\varphi)_{[L^2(\Omega)]^2}
        \\
          +\int_{\Omega}\bm{w}^*(t)\cdot R\left(\alpha(t){\ts +\frac{\pi}{2} }\right)\nabla u(t)\varphi \,dx=0,
        \mbox{ for any }\varphi\in H^1_0(\Omega),\mbox{ and a.e. }t\in(0,T).
      \end{gather*} 
  \item[(S3)] $[u(0),\alpha(0)] = [u_0, \alpha_0]$ in $ [L^2(\Omega)]^2 $.
  \end{itemize}
\end{definition}
\pagebreak

Now, the Main Theorems are stated as follows.

\begin{mainThm}(Existence of solution with energy-dissipation)\label{mainThm1}
  Under the assumptions (A0)--(A3), the system (S) admits a solution $[u,\alpha]$. 
  Moreover, the solution $[u,\alpha]$ fulfills the following energy-inequality:
  \begin{gather}
    \frac{1}{2}\int_{s}^{t}|\partial_t\alpha(\sigma)|^2_{L^2(\Omega)}\,d\sigma
    +\frac{\kappa}{2}\int_{s}^{t}|\nabla\partial_t\alpha(\sigma)|^2_{[L^2(\Omega)]^2}\,d\sigma+\int_{s}^{t}|\partial_tu(\sigma)|^2_{L^2(\Omega)}\,d\sigma
    \\
      +{\mu}\int_{s}^{t}|\nabla\partial_tu(\sigma)|^2_{[L^2(\Omega)]^2}\,d\sigma+E({u}(t),{\alpha}(t))\leq E({u}(s),{\alpha}(s)),
      \\
      \mbox{for a.e. $ s \in [0, T) $ including $ s = 0 $, and any } t \in [s, T].
    \label{ene-inq1}
  \end{gather}
\end{mainThm}
\begin{mainThm}(Uniqueness and continuous dependence of solution)\label{mainThm2}
    Under the assumption (A0)--(A3), let $ [u_{0, k}, \alpha_{0, k}] \in W_0^{1, p}(\Omega) \times H_0^1(\Omega) $, $ k = 1, 2 $, be two initial data, and let $ [u_k, \alpha_k] \in [L^2(0, T; L^2(\Omega))]^2 $, $ k = 1, 2 $, be two solutions to the system (S) when $ [u_0, \alpha_0] = [u_{0, k}, \alpha_{0, k}] $, $ k = 1, 2 $. Additionally, let us assume that $\gamma \in C^{1, 1}(\R^2) \cap C^2(\R^2) $.  Then, for a function of time, defined as:
    \begin{gather}\label{defOfJ}
        J(t) := |(u_1 -u_2)(t)|^2_{L^2(\Omega)} +\mu |\nabla (u_1 -u_2)|_{[L^2(\Omega)]^2} 
        \nonumber
        \\
        +|(\alpha_1 -\alpha_2)(t)|^2_{L^2(\Omega)} +\kappa |\nabla (\alpha_1 -\alpha_2)(t)|_{[L^2(\Omega)]^2}, \mbox{ for all } t \in [0,T],
    \end{gather}
    there exists a positive constant $ C_* $, such that
    \begin{gather}\label{cncl_J}
          J(t)\leq\exp(C_*T(1+|u_1|_{L^\infty(0,T;W^{1,p}_0(\Omega))})^2)J(0), \mbox{ for all $ t \in [0, T] $.}
    \end{gather}
    {
        Moreover, the energy-inequality \eqref{ene-inq1} holds for all $ 0 \leq s \leq t \leq T $. 
    }
\end{mainThm}

\section{Time-discretization scheme}
In this section, we focus on the time-discretization scheme of our system (S). Let $ m \in \N $ be the division number of the time-interval $ (0, T) $. Let $\tau := \frac{T}{m}$ be the time-step size, and let $\{t_i\}_{i=1}^m$ be a time-sequence defined as $t_i := i\tau$, $i = 1, 2, \dots, m$. Moreover, throughout this section, we suppose that the function ${\gamma}$ belongs to $C^{1,1}(\R^2) \cap C^2(\R^2)$, by taking suitable regularization if necessary.  

Based on these, we set up the time-discretization scheme (AP)$_\tau$ as an approximating problem of (S):
\\

(AP)$_\tau$: \, To find $\{[u_{i}, \alpha_{i}]\}_{i=1}^m \subset W^{1,p}_0(\Omega) \times H^1_0(\Omega) $ satisfying: 
  \begin{align*}
    &\frac{\alpha_{i}-\alpha_{i-1}}{\tau}
    -\Delta_0\alpha_{i}
    -\frac{\kappa}{\tau}\Delta_0(\alpha_{i}-\alpha_{i-1})
    \\
    &\quad +\nabla\gamma(R(\alpha_{i-1})\nabla u_{i-1})\cdot R\left(\alpha_{i-1}+{\ts \frac{\pi}{2}}\right)
    \nabla u_{i-1}=0~\mathrm{in}~L^2(\Omega),\nonumber
    \\
    &\frac{u_{i}-u_{i-1}}{\tau}
    -\mathrm{div}\Bigl( {^\top} R(\alpha_{i})\nabla{\gamma}(R(\alpha_{i})\nabla u_{i})
    +\nu|\nabla u_{i}|^{p-2}\nabla u_{i}
    \\
    &\quad +\frac{\mu}{\tau}\nabla(u_{i}-u_{i-1})\Bigr)
    +\lambda(u_{i}-u_{\mathrm{org}})
    =0~\mathrm{in}~L^2(\Omega),\nonumber
      ~~\mbox{for }i=1,2,\dots,m,
    \end{align*}
where $[u_{0},\alpha_{0}] \in W^{1,p}_0(\Omega) \times H^1_0(\Omega) $ is as in (A3), and $\Delta_0$ is the Laplacian operator subject to the zero-Dirichlet boundary condition. 

The solution to (AP)$_\tau$ is given as follows.
\begin{definition}
  A sequence of pair of functions $\{[u_{i}, \alpha_{i}]\}_{i=1}^m$ is called a solution to (AP)$_\tau$ if $\{[u_{i}, \alpha_{i}]\}_{i=1}^m \subset W^{1,p}_0(\Omega) \times H^1_0(\Omega)$, and $[u_{i}, \alpha_{i}]$ fulfills the following items for any $i = 1,2,\dots,m$:
  \\
  (AP1) $\alpha_{i}$ solves the following variational identity:
  \begin{align*}
    & \frac{1}{\tau}(\alpha_{i}-\alpha_{i-1},\varphi)_{L^2(\Omega)}+
    (\nabla\alpha_{i},\nabla\varphi)_{[L^2(\Omega)]^2}
    +\frac{\kappa}{\tau}(\nabla(\alpha_{i}-\alpha_{i-1}),\nabla\varphi)_{[L^2(\Omega)]^2}
    \\
    &\quad +(\nabla{\gamma}(R(\alpha_{i-1})\nabla u_{i-1}) \cdot 
    R\left(\alpha_{i-1}+{\ts \frac{\pi}{2}}\right)\nabla u_{i-1},\varphi)_{L^2(\Omega)}
    =0,~~~\mbox{ for any }\varphi\in H^1_0(\Omega).
  \end{align*}
  (AP2) $u_{i}$ solves the following variational identity:
  \begin{align*}
    & \frac{1}{\tau}(u_{i}-u_{i-1},\psi)_{L^2(\Omega)}+(\nabla\gamma(R(\alpha_{i})\nabla u_{i}),R(\alpha_{i})\nabla\psi)_{[L^2(\Omega)]^2}
    \\
    &\quad +\nu\int_\Omega|\nabla u_{i}|^{p-2}\nabla u_{i}\cdot\nabla\psi\,dx
    +\frac{\mu}{\tau}(\nabla(u_{i}-u_{i-1}),\nabla\psi)_{[L^2(\Omega)]^2}
    \\
    &\quad +\lambda(u_{i}-u_{\mathrm{org}},\psi)_{L^2(\Omega)}=0,~~~\mbox{ for any }\psi\in W^{1,p}_0(\Omega).
  \end{align*}
\end{definition}
\begin{theorem}(Existence, uniqueness of solution with energy-dissipation)\label{003Thm1}
  There exists a sufficiently small constant $\tau_*:=\tau_*(|\nabla\gamma|_{W^{1,\infty}(\R^2;\R^2)})\in (0,1)$, possibly dependent on $\gamma$, such that for any $ \tau \in (0,\tau_*) $, (AP)$_\tau$ admits a unique solution $\{[u_{i}, \alpha_{i}]\}_{i=1}^m$. Moreover, the solution $\{[u_{i}, \alpha_{i}]\}_{i=1}^m$ fulfills the following energy-inequality:
  \begin{align}
    &\frac{1}{2\tau}|\alpha_{i}-\alpha_{i-1}|^2_{L^2(\Omega)} +\frac{\kappa}{2\tau}|\nabla(\alpha_{i}-\alpha_{i-1})|^2_{[L^2(\Omega)]^2} +\frac{1}{\tau}|u_{i}-u_{i-1}|^2_{L^2(\Omega)}
    \label{f-ene0}
    \\
    &+\frac{\mu}{\tau}|\nabla(u_{i}-u_{i-1})|^2_{[L^2(\Omega)]^2} +E(u_{i},\alpha_{i}) \leq E(u_{i-1},\alpha_{i-1}),\mbox{ for any } i=1,2,\dots,m.
    \nonumber
  \end{align}
\end{theorem}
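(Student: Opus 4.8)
The plan is to exploit the one-directional coupling of the scheme: given $[u_{i-1},\alpha_{i-1}]$, the $\alpha$-equation (AP1) is \emph{linear} in $\alpha_i$, since its nonlinear forcing $f_{i-1}:=\nabla\gamma(R(\alpha_{i-1})\nabla u_{i-1})\cdot R(\alpha_{i-1}+\frac{\pi}{2})\nabla u_{i-1}$ involves only step-$(i-1)$ data; because $\gamma\in C^{1,1}$ makes $\nabla\gamma$ bounded and $\nabla u_{i-1}\in L^p(\Omega)\subset L^2(\Omega)$, we have $f_{i-1}\in L^2(\Omega)$. The associated bilinear form $\frac1\tau(\cdot,\cdot)_{L^2(\Omega)}+(1+\frac\kappa\tau)(\nabla\cdot,\nabla\cdot)_{[L^2(\Omega)]^2}$ is bounded and coercive on $H^1_0(\Omega)$, so Lax--Milgram yields a unique $\alpha_i\in H^1_0(\Omega)$. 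With $\alpha_i$ fixed, the $u$-equation (AP2) is the Euler--Lagrange equation of
\[
\mathcal F_i(u):=\tfrac1{2\tau}|u-u_{i-1}|_{L^2(\Omega)}^2+\tfrac{\mu}{2\tau}|\nabla(u-u_{i-1})|_{[L^2(\Omega)]^2}^2+\int_\Omega\gamma(R(\alpha_i)\nabla u)\,dx+\tfrac{\nu}{p}\int_\Omega|\nabla u|^p\,dx+\tfrac\lambda2|u-u_\mathrm{org}|_{L^2(\Omega)}^2,
\]
which is proper, strictly convex, coercive and weakly l.s.c.\ on the reflexive space $W^{1,p}_0(\Omega)$; the direct method gives a unique minimizer $u_i$, whose variational characterization is exactly (AP2). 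This settles existence and uniqueness for every $\tau>0$.

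For the energy inequality I would test (AP1) with $\alpha_i-\alpha_{i-1}$ and (AP2) with $u_i-u_{i-1}$, then add. The Dirichlet terms are rewritten by $2(a,a-b)=|a|^2-|b|^2+|a-b|^2$; the anisotropy and $p$-growth terms are bounded below by the convexity inequalities $\int_\Omega\nabla\gamma(R(\alpha_i)\nabla u_i)\cdot R(\alpha_i)\nabla(u_i-u_{i-1})\,dx\ge\int_\Omega[\gamma(R(\alpha_i)\nabla u_i)-\gamma(R(\alpha_i)\nabla u_{i-1})]\,dx$ and its analogue for $\frac1p|\cdot|^p$; the $\lambda$-term is handled by the same square identity. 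After summation every contribution assembles into $E(u_i,\alpha_i)-E(u_{i-1},\alpha_{i-1})$ together with the dissipation $\frac1\tau|\alpha_i-\alpha_{i-1}|^2_{L^2(\Omega)}+(\frac12+\frac\kappa\tau)|\nabla(\alpha_i-\alpha_{i-1})|^2_{[L^2(\Omega)]^2}+\frac1\tau|u_i-u_{i-1}|^2_{L^2(\Omega)}+\frac\mu\tau|\nabla(u_i-u_{i-1})|^2_{[L^2(\Omega)]^2}$, \emph{except} for a mismatch in the anisotropy energy: the $u$-test yields the term $\int_\Omega\gamma(R(\alpha_i)\nabla u_{i-1})\,dx$, whereas $E(u_{i-1},\alpha_{i-1})$ carries $\int_\Omega\gamma(R(\alpha_{i-1})\nabla u_{i-1})\,dx$.

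The crux is to control this mismatch, and here the explicit forcing $f_{i-1}$ is designed to help. Using $R'(\theta)=R(\theta+\frac\pi2)$ one has $\frac{d}{d\theta}\gamma(R(\theta)v)=\nabla\gamma(R(\theta)v)\cdot R(\theta+\frac\pi2)v$, so that $f_{i-1}=\frac{d}{d\theta}\gamma(R(\theta)\nabla u_{i-1})\big|_{\theta=\alpha_{i-1}}$ is precisely the first-order term. Consequently $\int_\Omega[\gamma(R(\alpha_i)\nabla u_{i-1})-\gamma(R(\alpha_{i-1})\nabla u_{i-1})]\,dx-(f_{i-1},\alpha_i-\alpha_{i-1})_{L^2(\Omega)}$ is the second-order Taylor remainder of $\theta\mapsto\int_\Omega\gamma(R(\theta)\nabla u_{i-1})\,dx$ about $\alpha_{i-1}$. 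Since $\gamma\in C^{1,1}\cap C^2$, I would bound $|\frac{d^2}{d\theta^2}\gamma(R(\theta)v)|\le|\nabla^2\gamma|_\infty|v|^2+|\nabla\gamma|_\infty|v|$, so the remainder is dominated by a constant depending on $|\nabla\gamma|_{W^{1,\infty}(\R^2;\R^2)}$ times $\int_\Omega(1+|\nabla u_{i-1}|^2)(\alpha_i-\alpha_{i-1})^2\,dx$; the delicate factor $\int_\Omega|\nabla u_{i-1}|^2(\alpha_i-\alpha_{i-1})^2\,dx$ is then estimated by H\"older with the two-dimensional embedding $H^1_0(\Omega)\hookrightarrow L^{2p/(p-2)}(\Omega)$ and Poincar\'e, giving the bound $C|\nabla u_{i-1}|_{L^p(\Omega)}^2\,|\nabla(\alpha_i-\alpha_{i-1})|_{[L^2(\Omega)]^2}^2$.

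Finally I would close the estimate by induction on $i$ together with the smallness of $\tau$. Assuming $E(u_j,\alpha_j)\le E(u_0,\alpha_0)=:E_0$ for $j\le i-1$ furnishes the uniform bound $|\nabla u_{i-1}|_{L^p(\Omega)}\le R_0:=(pE_0/\nu)^{1/p}$, so the remainder constant is independent of $i$. Because the produced $\alpha$-dissipation strictly exceeds the target coefficients $\frac1{2\tau}$ and $\frac\kappa{2\tau}$ of \eqref{f-ene0}, the surplus $\frac1{2\tau}|\alpha_i-\alpha_{i-1}|^2_{L^2(\Omega)}+(\frac12+\frac\kappa{2\tau})|\nabla(\alpha_i-\alpha_{i-1})|^2_{[L^2(\Omega)]^2}$ absorbs the remainder once $\tau<\tau_*$, with $\tau_*$ determined by $|\nabla\gamma|_{W^{1,\infty}(\R^2;\R^2)}$ (through the remainder constant) and by $R_0$. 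This yields \eqref{f-ene0} at step $i$, hence $E(u_i,\alpha_i)\le E_0$, closing the induction. \textbf{The main obstacle} is exactly this anisotropy mismatch: since $\theta\mapsto\gamma(R(\theta)v)$ is not convex, only a second-order remainder with quadratic growth in $|\nabla u_{i-1}|$ is available, and it is this term that simultaneously forces the restriction $\tau<\tau_*$ and the inductive energy bound.
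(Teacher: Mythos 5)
Your proposal is correct and follows essentially the same route as the paper: decoupled solvability of the two elliptic problems by convex minimization (Lax--Milgram for the linear $\alpha$-step, the direct method for the strictly convex $u$-step), testing with the increments, isolating the anisotropy mismatch as the second-order Taylor remainder of $\theta\mapsto\gamma(R(\theta)\nabla u_{i-1})$ about $\alpha_{i-1}$, bounding $\partial_\theta^2\gamma(R(\theta)v)$ via $|\nabla\gamma|_{W^{1,\infty}}$, estimating the resulting term through H\"older and the embedding $H^1(\Omega)\subset L^{2p/(p-2)}(\Omega)$, and closing by induction with $\tau<\tau_*$. No substantive differences.
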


We prepare some lemmas for the proof of Theorem \ref{003Thm1}. 
\begin{lemma}\label{003lemma1}{
  If $ \alpha_0^\dagger \in H^1_0(\Omega) $ and $ \tilde{u} \in H^{1} (\Omega) $, then there exist a unique solution $\alpha\in H^1_0(\Omega)$ of the following elliptic boundary value problem \eqref{AP1} and \eqref{AP1sol} holds:
  \begin{gather}
    \frac{1}{\tau}(\alpha-\alpha_0^\dagger)+\Delta_0(\alpha+\frac{\kappa}{\tau}(\alpha-\alpha_0^\dagger))
    +\nabla\gamma(R(\alpha_0^\dagger)\nabla \tilde{u})\cdot
    R(\alpha_0^\dagger+{\ts \frac{\pi}{2}})\nabla\tilde{u}=0~\mathrm{in}~\Omega,\label{AP1}
  \end{gather}
and
  \begin{gather}
    \frac{1}{\tau}(\alpha-\alpha_0^\dagger,\varphi)_{L^2(\Omega)}
    +(\nabla\alpha,\nabla\varphi)_{[L^2(\Omega)]^2}
    +\frac{\kappa}{\tau}(\nabla(\alpha-\alpha_0^\dagger),\nabla\varphi)_{[L^2(\Omega)]^2}\nonumber
    \\
    +(\nabla\gamma(R(\alpha_0^\dagger)\nabla \tilde{u})\cdot R(\alpha_0^\dagger+
    {\ts \frac{\pi}{2}})\nabla\tilde{u},\varphi)_{L^2(\Omega)}=0,\mbox{ for any }\varphi\in H^1_0(\Omega).
    \label{AP1sol}
  \end{gather}}
  \begin{proof}
    Let us consider a proper, l.s.c., strictly convex, and coercive function $\Psi: L^2(\Omega) \rightarrow (-\infty,\infty]$ defined as follows: 
    \begin{equation*}
      \Psi : z \in L^2(\Omega) \mapsto \Psi(z) : = \left\{
      \begin{aligned}
        &\frac{1}{2\tau}\int_\Omega|z-\alpha_0^\dagger|^2\,dx+\frac{1}{2}\int_\Omega|\nabla z|^2\,dx+\frac{\kappa}{2\tau}\int_\Omega|\nabla (z-\alpha_0^\dagger)|^2\,dx
        \\
        &\quad+\int_\Omega\nabla\gamma(R(\alpha_0^\dagger)\nabla\tilde{u})\cdot R(\alpha_0^\dagger+{\ts \frac{\pi}{2}})\nabla\tilde{u}z\,dx, \mbox{ if } z \in H^1_0(\Omega),
        \\
        &+\infty,~\mbox{ otherwise}.
      \end{aligned}
      \right.
    \end{equation*}
    Since $\Psi$ is strictly convex, the minimizer of $\Psi$ is unique, and it directly leads to the existence and uniqueness of solution $\alpha\in H^1_0(\Omega)$ to \eqref{AP1}.
  \end{proof}
\end{lemma}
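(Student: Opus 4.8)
The plan is to recognize that, despite appearances, the problem \eqref{AP1} is \emph{linear} in the unknown $\alpha$, and then to obtain the weak solution \eqref{AP1sol} as the unique minimizer of a strictly convex, coercive, l.s.c.\ functional via the direct method. First I would observe that the term $\nabla\gamma(R(\alpha_0^\dagger)\nabla\tilde{u})$ is evaluated only at the \emph{given} data $\alpha_0^\dagger$ and $\tilde{u}$, so the whole nonlinear-looking expression is a fixed source term
\[ f := \nabla\gamma(R(\alpha_0^\dagger)\nabla\tilde{u})\cdot R\bigl(\alpha_0^\dagger + {\ts \frac{\pi}{2}}\bigr)\nabla\tilde{u}. \]
Invoking assumption (A2), which gives $\nabla\gamma \in L^\infty(\R^2;\R^2)$, together with the orthogonality (hence boundedness) of the rotation matrices $R(\cdot)$ and the hypothesis $\tilde{u}\in H^1(\Omega)$ (so that $\nabla\tilde{u}\in[L^2(\Omega)]^2$), I would check that $f$ is the pointwise product of an $L^\infty$ field and an $L^2$ field, whence $f\in L^2(\Omega)$. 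This integrability is the only input the nonlinearity contributes.

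Next I would introduce the functional $\Psi:L^2(\Omega)\to(-\infty,\infty]$, set to $+\infty$ off $H^1_0(\Omega)$ and, for $z\in H^1_0(\Omega)$, given by
\[ \Psi(z) := \frac{1}{2\tau}\int_\Omega|z-\alpha_0^\dagger|^2\,dx + \frac{1}{2}\int_\Omega|\nabla z|^2\,dx + \frac{\kappa}{2\tau}\int_\Omega|\nabla(z-\alpha_0^\dagger)|^2\,dx + \int_\Omega f\,z\,dx. \]
I would verify that $\Psi$ is proper (finite at $z=\alpha_0^\dagger$), l.s.c.\ on $L^2(\Omega)$ (the quadratic gradient terms are weakly $H^1$-lower-semicontinuous and $z\mapsto\int_\Omega f\,z\,dx$ is $L^2$-continuous), and strictly convex, since its leading quadratic part $\frac{1}{2\tau}|z|^2_{L^2(\Omega)}+\frac{1}{2}(1+\frac{\kappa}{\tau})|\nabla z|^2_{[L^2(\Omega)]^2}$ is strictly convex on $H^1_0(\Omega)$. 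Coercivity on $H^1_0(\Omega)$ follows after completing the square and absorbing the linear contributions of $\alpha_0^\dagger$ and $f$ by Young's inequality, yielding a lower bound of the form $\Psi(z)\geq c\,|z|^2_{H^1_0(\Omega)}-C$.

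Then I would run the direct method: along a minimizing sequence $\{z_n\}\subset H^1_0(\Omega)$, coercivity gives an $H^1_0(\Omega)$-bound, so a subsequence converges weakly in $H^1_0(\Omega)$ and strongly in $L^2(\Omega)$ to some $\alpha\in H^1_0(\Omega)$; lower semicontinuity yields $\Psi(\alpha)\leq\liminf_n\Psi(z_n)$, so $\alpha$ is a minimizer, and strict convexity forces it to be the unique one. Finally, computing the first variation, for every $\varphi\in H^1_0(\Omega)$ the map $t\mapsto\Psi(\alpha+t\varphi)$ is differentiable at $t=0$ with derivative
\[ \frac{1}{\tau}(\alpha-\alpha_0^\dagger,\varphi)_{L^2(\Omega)} + (\nabla\alpha,\nabla\varphi)_{[L^2(\Omega)]^2} + \frac{\kappa}{\tau}(\nabla(\alpha-\alpha_0^\dagger),\nabla\varphi)_{[L^2(\Omega)]^2} + (f,\varphi)_{L^2(\Omega)}, \]
and setting this to zero at the minimizer recovers exactly \eqref{AP1sol}; conversely, convexity makes any solution of \eqref{AP1sol} the minimizer, so the weak solution is unique.

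The main obstacle here is only a bookkeeping one: ensuring the integrability $f\in L^2(\Omega)$ and matching the sign and structure of the Euler--Lagrange derivative term by term with \eqref{AP1sol}; the rest is a routine application of the direct method. An equivalent and shorter route, which I would note as an alternative, is to apply the Lax--Milgram theorem directly: the bilinear form $(\alpha,\varphi)\mapsto\frac{1}{\tau}(\alpha,\varphi)_{L^2(\Omega)}+(1+\frac{\kappa}{\tau})(\nabla\alpha,\nabla\varphi)_{[L^2(\Omega)]^2}$ is bounded, symmetric, and coercive on $H^1_0(\Omega)$, while $\varphi\mapsto\frac{1}{\tau}(\alpha_0^\dagger,\varphi)_{L^2(\Omega)}+\frac{\kappa}{\tau}(\nabla\alpha_0^\dagger,\nabla\varphi)_{[L^2(\Omega)]^2}-(f,\varphi)_{L^2(\Omega)}$ is a bounded linear functional, so existence and uniqueness of $\alpha\in H^1_0(\Omega)$ satisfying \eqref{AP1sol} are immediate.
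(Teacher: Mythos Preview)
Your proposal is correct and follows essentially the same approach as the paper: both introduce the identical functional $\Psi$ and obtain the solution as its unique minimizer, with your version simply spelling out the verifications (integrability of $f$, properness, coercivity, lower semicontinuity, and the Euler--Lagrange computation) that the paper leaves implicit. The Lax--Milgram alternative you note is a valid shortcut but is not pursued in the paper.
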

\begin{lemma}\label{003lemma2}{
  If $ u_0^\dagger \in W^{1,p}_0(\Omega) $ and $ \tilde{\alpha} \in L^2(\Omega) $, then there exist a unique solution $u\in W^{1,p}_0(\Omega)$ of the following elliptic boundary value problem \eqref{AP2} and \eqref{AP2sol} holds:
  \begin{align}
    &\frac{1}{\tau}(u-u_0^\dagger)-
    \mathrm{div}
    \biggl(  
      {^\top} R(\tilde{\alpha})\cdot\nabla\gamma(R(\tilde{\alpha})\nabla u)+\nu |\nabla u|^{p-2}\nabla u+\frac{\mu}{\tau}\nabla(u-u_0^\dagger)
    \biggr)\label{AP2}
    \\
    &\qquad+\lambda(u-u_{\mathrm{org}})=0 \mathrm{~in~} \Omega,
    \nonumber
  \end{align}
and 
  \begin{align}
    &\frac{1}{\tau} (u-u_0^\dagger,\psi)_{L^2(\Omega)}+ \nu \int_\Omega |\nabla u|^{p-2}\nabla u \cdot \nabla\psi\,dx+\frac{\mu}{\tau}(\nabla(u-u_0^\dagger),\nabla\psi)_{[L^2(\Omega)]^2}
    \label{AP2sol}
    \\
    &\quad+(\nabla\gamma(R(\tilde{\alpha})\nabla u),R(\tilde{\alpha})\nabla\psi)_{[L^2(\Omega)]^2}+\lambda(u-u_{\mathrm{org}},\psi)_{L^2(\Omega)}=0, \mbox{ for any } \psi \in W^{1,p}_0(\Omega).
    \nonumber
  \end{align}}
  \begin{proof}
    Let us consider a proper, l.s.c., strictly convex, and coercive function $\Phi: L^2(\Omega) \rightarrow (-\infty,\infty]$ defined as follows: 
    \begin{equation*}
      \Phi : z \in L^2(\Omega) \mapsto \Phi(z) : = \left\{
      \begin{aligned}
        &\frac{1}{2\tau}\int_\Omega|z-u_0^\dagger|^2\,dx
        +\frac{\nu}{p}\int_\Omega|\nabla z|^p\,dx
        +\frac{\mu}{2\tau}\int_\Omega|\nabla (z- u_0^\dagger)|^2\,dx
        \\
        &\quad+\int_\Omega\gamma(R(\tilde{\alpha})\nabla z)\,dx
        +\frac{\lambda}{2}\int_\Omega|z-u_{\mathrm{org}}|^2\,dx
        ~\mathrm{if}~z\in W^{1,p}_0(\Omega),
        \\
        &+\infty,~\mathrm{otherwise}.
      \end{aligned}
      \right.
    \end{equation*}
    Since $\Phi$ is strictly convex, the minimizer of $\Phi$ is unique, and it directly leads to the existence and uniqueness of solution $ u \in W^{1,p}_0(\Omega) $ to \eqref{AP2}.
  \end{proof}
\end{lemma}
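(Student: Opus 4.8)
The plan is to recover the weak solution \eqref{AP2sol} as the unique minimizer of the convex functional $\Phi$ displayed in the statement, via the direct method of the calculus of variations on $W^{1,p}_0(\Omega)$. First I would check that $\Phi$ is proper: it is finite at $z = u_0^\dagger \in W^{1,p}_0(\Omega)$, since $\gamma$ is finite-valued by (A2). Each of its five terms is convex: the three quadratic expressions in $z - u_0^\dagger$ and $\nabla(z - u_0^\dagger)$; the gradient term, because $r \mapsto |r|^p$ is convex for $p > 1$; and the anisotropy term $z \mapsto \int_\Omega \gamma(R(\tilde{\alpha})\nabla z)\,dx$, because $\gamma$ is convex and $z \mapsto R(\tilde{\alpha})\nabla z$ is linear. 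Strict convexity of $\Phi$ on $L^2(\Omega)$ then follows from the single term $\frac{1}{2\tau}\int_\Omega |z - u_0^\dagger|^2\,dx$, which is strictly convex in $z \in L^2(\Omega)$, the remaining terms being convex.

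A preliminary observation makes all integrals meaningful despite $\tilde{\alpha}$ being only $L^2$: the entries of the rotation matrix $R(\tilde{\alpha})$ are $\cos\tilde{\alpha}$ and $\sin\tilde{\alpha}$, hence lie in $[-1,1]$ pointwise, so $R(\tilde{\alpha}) \in L^\infty(\Omega;\R^{2\times 2})$ and is orthogonal a.e. Consequently $R(\tilde{\alpha})\nabla z \in [L^p(\Omega)]^2$ whenever $z \in W^{1,p}_0(\Omega)$, the map $x \mapsto \gamma(R(\tilde{\alpha}(x))\nabla z(x))$ is measurable and integrable (using that $\gamma$ is Lipschitz with $\gamma(0) = 0$ and $\Omega$ is bounded), and $|R(\tilde{\alpha})v| = |v|$ a.e. This legitimizes every term in $\Phi$ and in \eqref{AP2sol}.

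Next I would produce a minimizer. For a minimizing sequence $\{z_n\} \subset W^{1,p}_0(\Omega)$ one eventually has $\Phi(z_n) \leq \Phi(u_0^\dagger) + 1$; since every summand of $\Phi$ is nonnegative (here $\gamma \geq 0$), each is bounded, giving $\{z_n\}$ bounded in $L^2(\Omega)$ and $\{\nabla z_n\}$ bounded in $[L^p(\Omega)]^2$. By Poincar\'e's inequality $\{z_n\}$ is bounded in the reflexive space $W^{1,p}_0(\Omega)$, so a subsequence converges weakly to some $u \in W^{1,p}_0(\Omega)$. Each term of $\Phi$ is convex and strongly continuous on $W^{1,p}_0(\Omega)$ — for the anisotropy term, $\bigl|\int_\Omega \gamma(R(\tilde{\alpha})\nabla z_1)\,dx - \int_\Omega \gamma(R(\tilde{\alpha})\nabla z_2)\,dx\bigr| \leq |\nabla\gamma|_{L^\infty(\R^2;\R^2)}\,|\Omega|^{1/p'}\,|\nabla(z_1 - z_2)|_{[L^p(\Omega)]^2}$ with $p' = p/(p-1)$ — hence weakly lower semicontinuous, so $\Phi(u) \leq \liminf_n \Phi(z_n) = \inf \Phi$ and $u$ is a minimizer. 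Strict convexity makes it the unique minimizer.

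Finally I would identify the Euler--Lagrange equation. Since $\gamma \in C^{1,1}(\R^2)\cap C^2(\R^2)$ in this section, $\Phi$ is G\^ateaux differentiable at every point of $W^{1,p}_0(\Omega)$ along every direction $\psi \in W^{1,p}_0(\Omega)$: differentiating $s \mapsto \Phi(u + s\psi)$ at $s = 0$ and passing the derivative inside each integral yields exactly the left-hand side of \eqref{AP2sol}. The interchange is justified by dominated convergence, with dominating functions obtained from H\"older's inequality with the pair $(p,p')$ for the gradient term (using $|\nabla u|^{p-1} \in L^{p'}(\Omega)$ and $\nabla\psi \in [L^p(\Omega)]^2$) and from the boundedness of $\nabla\gamma$ for the anisotropy term. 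Because $\Phi$ is convex and differentiable, a point $u \in W^{1,p}_0(\Omega)$ is a global minimizer if and only if $\Phi'(u) = 0$, i.e. if and only if \eqref{AP2sol} holds; thus the weak solutions of \eqref{AP2} are precisely the minimizers of $\Phi$, of which there is exactly one. The main technical obstacle is this last differentiation-under-the-integral step, where the super-quadratic growth $p > 2$ of the gradient term must be controlled simultaneously with the merely $C^{1,1}$ regularity of $\gamma$; the rest is a standard application of the direct method.
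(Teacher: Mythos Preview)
Your proof is correct and follows the same approach as the paper: both identify the weak solution as the unique minimizer of the strictly convex, coercive, l.s.c.\ functional $\Phi$, with the Euler--Lagrange equation giving \eqref{AP2sol}. The paper's proof is simply more terse, asserting these properties of $\Phi$ without the detailed verification you supply.
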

\begin{proof}[Proof of Theorem \ref{003Thm1}]
    First, for a simplicity, let us set
  \begin{align*}
      &c_{E_0}:=E(u_{0},\alpha_{0}) \in [0, \infty).
  \end{align*}
    
    The existence and uniqueness of the solution to (AP)$_\tau$ is a straightforward result of {Lemmas \ref{003lemma1} and \ref{003lemma2}}. In fact, applying {these lemmas and putting}
  \begin{align*}
    u_0^\dagger:=u_{i-1},~\tilde{u}:=u_{i-1},~\alpha_0^\dagger:=\alpha_{i-1},~\tilde{\alpha}:=\alpha_{i},
  \end{align*}
    we immediately obtain the unique solution $ \{ [u_i, \alpha_i] \}_{i = 1}^m $, inductively.

Next, we will prove the inequality \eqref{f-ene0}. {Let} $ \varphi : = \alpha_{i} - \alpha_{i-1} $ in \eqref{AP1sol} and $ \psi : = u_{i} - u_{i-1} $ in \eqref{AP2sol}. By Young's inequality and the convexity of $ \gamma $, we can see that:
\begin{align}
  & \frac{1}{\tau} | \alpha_{i} - \alpha_{i-1} | ^2 _{L^2(\Omega)}
  +\frac{1}{2} | \nabla \alpha_{i} | ^2 _{[L^2(\Omega)]^2}
  -\frac{1}{2} | \nabla \alpha_{i-1} | ^2 _{[L^2(\Omega)]^2}
  +\frac{\kappa}{\tau} | \nabla ( \alpha_{i} - \alpha_{i-1} ) |^2 _{[L^2(\Omega)]^2}
  \nonumber
  \\
  & \quad +\int_\Omega \nabla \gamma ( R (\alpha_{i-1} ) \nabla u_{i-1} ) 
  \cdot R \left( \alpha_{i-1} +{\ts \frac{\pi}{2}} \right) \nabla u_{i-1} ( \alpha_{i} - \alpha_{i-1} )
  \leq 0,\label{inequ001}
\end{align}
and 
\begin{align}
  & \frac{1}{\tau} | u_{i} - u_{i-1} | ^2 _{L^2(\Omega)} +\frac{\nu}{p} | \nabla u_{i} | ^p _{[L^p(\Omega)]^2} -\frac{\nu}{p} | \nabla u_{i-1} | ^p _{[L^p(\Omega)]^2}
  \nonumber
  \\
  & \quad +\frac{\mu}{\tau} | \nabla ( u_{i} - u_{i-1} ) |^2 _{[L^2(\Omega)]^2} +\frac{\lambda}{2} | u_{i} - u_{\mathrm{org}} | ^2 _{L^2(\Omega)} -\frac{\lambda}{2} | u_{i-1} - u_{\mathrm{org}} | ^2 _{L^2(\Omega)}  
  \label{inequ002}
  \\
  & \quad +\int_\Omega \gamma ( R (\alpha_{i} ) \nabla u_{i} ) \, dx -\int_\Omega \gamma ( R (\alpha_{i} ) \nabla u_{i-1} ) \, dx \leq 0,\mbox{ for any } i = 1, 2, \dots , m.
  \nonumber
\end{align}
    Also, taking the sum of \eqref{inequ001} and \eqref{inequ002}, and applying Taylor's theorem for $ \gamma \in C^{1, 1}(\R^2) \cap C^2(\R^2) $, we have
\begin{align}
  &\frac{1}{\tau}\big(|\alpha_{i}-\alpha_{i-1}|^2_{L^2(\Omega)} +\kappa|\nabla(\alpha_{i}-\alpha_{i-1})|^2_{[L^2(\Omega)]^2} +|u_{i}- u_{i-1}|^2_{L^2(\Omega)}
  \nonumber
  \\
  &\quad+\mu|\nabla(u_{i}-u_{i-1})|^2_{[L^2(\Omega)]^2}\big)+E(u_{i},\alpha_{i})-E(u_{i-1},\alpha_{i-1})
  \nonumber
  \\
  &\quad\leq\int_\Omega\Bigl[\gamma(R(\alpha_{i})\nabla u_{i-1}) -\gamma(R(\alpha_{i-1})\nabla u_{i-1}) 
  \nonumber
  \\
  &\quad\quad-\nabla\gamma(R(\alpha_{i-1})\nabla u_{i-1})\cdot R\left(\alpha_{i-1}+{\ts\frac{\pi}{2}}\right)\nabla u_{i-1}(\alpha_{i}-\alpha_{i-1})\Bigr]\,dx
  \label{inequ003}
  \\
  &\quad\leq \int_\Omega \int_0^1(1-\sigma)\frac{\partial^2}{\partial \alpha^2} \Big[ \gamma ( R (\alpha_{i-1}+\sigma(\alpha_{i}-\alpha_{i-1}) ) \nabla u_{i-1} ) \Big] ( \alpha_{i} - \alpha_{i-1} )^2 \,d\sigma dx.\nonumber
\end{align}
Here, by using the chain-rule, we easily check that
\begin{align}\label{003gamma2}
    & \left| \frac{\partial^2}{\partial \alpha^2}\Big[ \gamma ( R (\alpha ) \bm{w} ) \Big] \right| \leq 4 |\nabla^2 \gamma|_{L^\infty(\R^2; \R^{2 \times 2})} |\bm{w}|^2 +2 |\nabla \gamma|_{L^\infty(\R^2; \R^2)}|\bm{w}|
    \\
    & \hspace{0ex} \leq 2 \bigl( 1 +|\nabla \gamma|_{W^{1, \infty}(\R^2; \R^2)} \bigr)^2 (1 +|\bm{w}|^2),
    \mbox{ for all } [\alpha, \bm{w}] \in \R \times \R^2.
\end{align}
So, as a consequence of \eqref{inequ003} and \eqref{003gamma2}, it is deduced that
\begin{align}
  &\frac{1}{\tau} 
  \Bigl( 
  | \alpha_{i} - \alpha_{i-1} | ^2 _{L^2(\Omega)} 
  + \kappa | \nabla ( \alpha_{i} - \alpha_{i-1} ) |^2 _{[L^2(\Omega)]^2} 
  + | u_{i} - u_{i-1} | ^2 _{L^2(\Omega)}
  \nonumber
  \\
  &\quad+ \mu | \nabla ( u_{i} - u_{i-1} ) |^2 _{[L^2(\Omega)]^2}
  \Bigr)
  + E ( u_{i} , \alpha_{i} )
  - E ( u_{i-1} , \alpha_{i-1} )
  \label{inequ010}
  \\
    &\quad\leq 2 \bigl(1 +|\nabla \gamma|_{W^{1, \infty}(\R^2; \R^2)} \bigr)^2 \int_\Omega (1 +|\nabla u_{i-1}|^2)(\alpha_{i} - \alpha_{i-1} )^2 \,dx.\nonumber
\end{align}
Furthermore, owing to the two-dimensional embedding
\begin{equation}\label{2D-emb}
    H^1(\Omega) \subset L^{\frac{2p}{p -2}}(\Omega) \mbox{ for $ p > 2 $,}
\end{equation}
the principal integral of the right-hand side of \eqref{inequ010} will be estimated as follows:
\begin{align}
  &\int_\Omega (1+|\nabla u_{i-1}|^2)(\alpha_{i} - \alpha_{i-1} )^2 \,dx 
    \nonumber
    \\
    & \leq (|\nabla u_{i-1}|^2_{[L^p(\Omega)]^2}|\alpha_{i}-\alpha_{i-1}|^2_{L^\frac{2p}{p-2}(\Omega)}+|\alpha_{i}-\alpha_{i-1}|^2_{L^2(\Omega)})
    \nonumber
  \\
  &\leq (1 +(C_{H^1}^{L^\frac{2p}{p-2}})^2)(1 +|\nabla u_{i-1}|^2 _{[L^p(\Omega)]^2})| \alpha_{i}-\alpha_{i-1}|^2_{H^1(\Omega)}
    \nonumber
  \\
  &\leq {C_*}(1 +E(u_{i-1},\alpha_{i-1})^\frac{2}{p})(|\alpha_{i}-\alpha_{i-1}|^2_{L^2(\Omega)}+\kappa|\nabla(\alpha_{i}-\alpha_{i-1})|^2_{[L^2(\Omega)]^2}){,}
    \label{ken01}
\end{align}
where $ C_* $ is a positive constant given as:
\begin{gather*}
    C_* := \frac{(1 +( C_{H^1}^{L^\frac{2p}{p-2}} )^2)(1 +( \frac{p}{\nu} ) ^\frac{2}{p}) }{1\land \kappa},
    \end{gather*}
    {with the constant $ C_{H^1}^{L^{\frac{2p}{p -2}}} $ of the embedding \eqref{2D-emb}.}
    \\
On account of \eqref{inequ003}, \eqref{inequ010}, and \eqref{ken01}, we will infer that
\begin{align}
  &\frac{1}{\tau} \bigl( | \alpha_{i} - \alpha_{i-1} | ^2 _{L^2(\Omega)}  + \kappa | \nabla ( \alpha_{i} - \alpha_{i-1} ) |^2 _{[L^2(\Omega)]^2} + | u_{i} - u_{i-1} | ^2 _{L^2(\Omega)}
  \nonumber
  \\
  &\quad+ \mu | \nabla ( u_{i} - u_{i-1} ) |^2 _{[L^2(\Omega)]^2}\bigr)+ E ( u_{i} , \alpha_{i} )- E ( u_{i-1} , \alpha_{i-1} )
  \nonumber
  \\
    &\leq 2 {C_*}(1 +|\nabla \gamma|_{W^{1, \infty}(\R^2; \R^2)})^2(1+E(u_{i-1},\alpha_{i-1})^\frac{2}{p}) \cdot 
    \\
    & \qquad \qquad \cdot \bigl(|\alpha_{i}-\alpha_{i-1}|^2_{L^2(\Omega)}+\kappa|\nabla (\alpha_{i}-\alpha_{i-1} )|^2_{[L^2(\Omega)]^2}\bigr),
  \nonumber
  \\
  &\hspace{20ex}\mbox{for any } i = 1, 2, \dots ,m.
  \label{inequ005}
\end{align}

Now, let us set
\begin{gather}\label{ken_tau_*}
\tau_*:=\frac{1}{4{C_*}(1 +|\nabla\gamma|_{W^{1, \infty}(\R^2; \R^2)})^2(1+c_{E_0}^\frac{2}{p})}.
\end{gather}
Then, having in mind
\begin{align}\label{contau}
 {2\tau (1 +|\nabla\gamma|_{W^{1, \infty}(\R^2; \R^2)})^2(1+c_{E_0}^{\frac{2}{p}})} \leq \frac{1}{2}, \mbox{ for } \tau \in ( 0 , \tau_* ),
\end{align}
we can verify the energy-dissipation \eqref{f-ene0} for $ \tau \in ( 0 , \tau_* ) $, by means of induction argument.

In fact, when $ i=1 $, it is observed from \eqref{contau} that
\begin{align*}
  &\frac{1}{\tau}\bigl(|\alpha_{1}-\alpha_{0}|^2_{L^2(\Omega)}+\kappa|\nabla(\alpha_{1}-\alpha_{0})|^2_{[L^2(\Omega)]^2}+|u_{1}-u_{0}|^2_{L^2(\Omega)}
  \\
  &\quad+\mu|\nabla(u_{1}-u_{0})|^2_{[L^2(\Omega)]^2}\bigr)+E(u_{1},\alpha_{1})-E(u_{0},\alpha_{0})
  \\
    &\leq 2 {C_*}(1 +|\nabla\gamma|_{W^{1, \infty}(\R^2; \R^2)})^2 (1+E(u_{0},\alpha_{0})^{\frac{2}{p}})(|\alpha_{1}-\alpha_{0}|^2_{L^2(\Omega)}+\kappa|\nabla(\alpha_{1}-\alpha_{0})|^2_{[L^2(\Omega)]^2}) 
  \\
  &\leq{2 \tau {C_*}(1 +|\nabla\gamma|_{W^{1, \infty}(\R^2; \R^2)})^2(1+c_{E_0}^{\frac{2}{p}})}\cdot\frac{1}{\tau}(|\alpha_{1}-\alpha_{0}|^2_{L^2(\Omega)}+\kappa|\nabla(\alpha_{1}-\alpha_{0})|^2_{[L^2(\Omega)]^2}) 
  \\
    &\leq\frac{1}{2\tau}\bigl(|\alpha_{1}-\alpha_{0}|^2_{L^2(\Omega)}+\kappa|\nabla(\alpha_{1}-\alpha_{0})|^2_{[L^2(\Omega)]^2} \bigr), \mbox{ for any $ \tau \in ( 0, \tau_* ) $.}
\end{align*}
Hence, \eqref{f-ene0} holds at the initial time-step $ i = 1 $. 

Subsequently, if \eqref{f-ene0} holds when $ i = k $ for all $ 1 \leq k \leq m -1$, then the assumption of induction immediately derives
\begin{gather*}
E ( u_{k} , \alpha_{k} ) \leq E ( u_{k-1} , \alpha_{k-1} ) \leq \dots \leq E ( u_{1} , \alpha_{1} ) \leq E ( u_{0} , \alpha_{0}) = c_{E_0}.
\end{gather*}
Therefore, invoking \eqref{inequ005}, we can verify \eqref{f-ene0} for the next time-step $ i = k +1 $, as follows:
\begin{align*}
  &\frac{1}{\tau}(|\alpha_{k+1}-\alpha_{k}|^2_{L^2(\Omega)}+\kappa|\nabla(\alpha_{k+1}-\alpha_{k})|^2_{[L^2(\Omega)]^2}+|u_{k+1}-u_{k}|^2_{L^2(\Omega)}
  \\
  &\quad+\mu|\nabla(u_{k+1}-u_{k})|^2_{[L^2(\Omega)]^2})+E(u_{k+1},\alpha_{k+1})-E(u_{k},\alpha_{k})
  \\
  &\leq 2{C_*}(1 +|\nabla\gamma|_{W^{1, \infty}(\R^2; \R^2)})^2(1+E(u_{k},\alpha_{k})^\frac{2}{p})(|\alpha_{k+1}-\alpha_{k}|^2_{L^2(\Omega)}+\kappa|\nabla(\alpha_{k+1}-\alpha_{k})|^2_{[L^2(\Omega)]^2})
  \\
  &\leq 2 \tau {C_*}(1 +|\nabla\gamma|_{W^{1, \infty}(\R^2; \R^2)})^2(1+c_{E_0}^\frac{2}{p})\frac{1}{\tau}(|\alpha_{k+1}-\alpha_{k}|^2_{L^2(\Omega)}+\kappa|\nabla(\alpha_{k+1}-\alpha_{k})|^2_{[L^2(\Omega)]^2})
  \\
  &\leq \frac{1}{2\tau}\bigl(|\alpha_{k+1}-\alpha_{k}|^2_{L^2(\Omega)}+\kappa|\nabla(\alpha_{k+1}-\alpha_{k})|^2_{[L^2(\Omega)]^2}\bigr), \mbox{ for any }\tau\in(0,\tau_*).
\end{align*}

Thus, we conclude Theorem \ref{003Thm1}. 
\end{proof}

\section{Proof of Main Theorem 1.}
    First, we define a regularization sequence $ \{ \gamma_\varepsilon \}_{\varepsilon \in (0, 1)} \subset C^{1, 1}(\R^2) \cap C^2(\R^2) $, by setting:
  \begin{gather*}
    \gamma_\varepsilon:=\left\{
      \begin{aligned}
          &\rho_\varepsilon*\gamma,&\mbox{ if }\gamma\notin C^{1,1}(\R^2) \cap C^2(\R^2),
        \\
          &\gamma,&\mbox{ if } \gamma\in C^{1,1}(\R^2) \cap C^2(\R^2),
      \end{aligned}\right.
      \\
      {\leftline{\mbox{with use of the standard mollifier $ \rho_\varepsilon \in C_\mathrm{c}^\infty(\R^2) $, for any $ \varepsilon \in (0, 1) $.}}}
  \end{gather*}
    Besides, we define an approximating free-energy $E_\varepsilon$ on $[L^2(\Omega)]^2$, as follows:
    \begin{gather}
    E_\varepsilon :[u,\alpha]\in W^{1,p}_0(\Omega) \times H^1_0(\Omega) \mapsto E_\varepsilon(u,\alpha):=\frac{1}{2}\int_\Omega|\nabla \alpha|^2\,dx
    +\frac{\nu}{p}\int_\Omega|\nabla u|^p\,dx\nonumber
    \\
    +\int_\Omega{\gamma}_\varepsilon(R(\alpha)\nabla u)\,dx
    +\frac{\lambda}{2}\int_\Omega|u-u_{\mathrm{org}}|^2\,dx\in[0,\infty).\label{E_eps}
  \end{gather}
  As is easily seen, 
    \begin{gather}\label{ken_gamma}
        \begin{cases}
            \gamma_\varepsilon \to \gamma \mbox{ uniformly on $ \R^2 $ as $ \varepsilon \downarrow 0 $,}
            \\
            |\nabla \gamma_\varepsilon|_{L^\infty(\R^2; \R^2)} \leq |\nabla \gamma|_{L^\infty(\R^2; \R^2)}, \mbox{ for all $ \varepsilon \in (0, 1) $.}
        \end{cases}
    \end{gather}
    The uniform convergence of $ \{ \gamma_\varepsilon \}_{\varepsilon \in (0, 1)} $ will imply
\begin{gather*}
    E_\varepsilon\rightarrow E\mbox{ on }[L^2(\Omega)]^2,\mbox{ in the sense of $\Gamma$-convergence, as }\varepsilon\downarrow0,
\end{gather*}
    and the uniform estimate for $ \{ |\nabla \gamma_\varepsilon|_{L^\infty(\R^2; \R^2)} \}_{\varepsilon \in (0, 1)} $ will lead to
   \begin{equation}\label{constc_E}
       c_E:=\sup_{\varepsilon\in(0,1)}E_\varepsilon(u_{0},\alpha_{0}) \leq E(\alpha_0, u_0) +|\nabla \gamma|_{L^\infty(\R^2; \R^2)}{|\Omega| <\infty,}
   \end{equation}
    {where $ |\Omega| $ denotes the Lebesgue measure of $ \Omega \subset \R^2 $. }

    Now, we take any $ \varepsilon \in (0, 1) $, and define an $ \varepsilon $-dependent constant $ \tau_{*}(\varepsilon) $ as follows:
\begin{gather*}
    \tau_{*}(\varepsilon):=\frac{1}{4C_*\bigl(1 +|\nabla\gamma_\varepsilon|_{W^{1, \infty}(\R^2; \R^2)} \bigr)^2(1+c_{E}^\frac{2}{p})}.
    \end{gather*}
    Invoking \eqref{ken_tau_*}, and applying Theorem \ref{003Thm1} to the case when $\gamma = \gamma_\varepsilon$, we will find a sequence of functional pairs $ \{ [u_{\varepsilon, i}, \alpha_{\varepsilon, i}] \}_{i = 1}^m $, for $ \varepsilon \in (0, 1) $, such that:
    \begin{gather}\label{ken_varIneq01}
    \frac{1}{\tau}(\alpha_{\varepsilon,i} - \alpha_{\varepsilon,i-1}, \varphi)_{L^2(\Omega)} +
    (\nabla \alpha_{\varepsilon,i}, \nabla \varphi)_{[L^2(\Omega)]^2} +
    \frac{\kappa}{\tau}(\nabla(\alpha_{\varepsilon,i} - \alpha_{\varepsilon,i-1}), \nabla \varphi)_{[L^2(\Omega)]^2} 
    \\
    + (\nabla \gamma_\varepsilon (R(\alpha_{\varepsilon,i-1}) \nabla u_{\varepsilon,i-1}) \cdot
    R \left(\alpha_{\varepsilon,i-1} + \ts\frac{\pi}{2} \right) \nabla u_{\varepsilon,i-1}, \varphi)_{L^2(\Omega)} = 0,
    \end{gather}
    \begin{gather}\label{ken_varIneq02}
    \frac{1}{\tau}(u_{\varepsilon,i} - u_{\varepsilon,i-1}, \psi)_{L^2(\Omega)} +
    (\nabla \gamma_\varepsilon (R(\alpha_{\varepsilon,i}) \nabla u_{\varepsilon,i}), R(\alpha_{\varepsilon,i}) \nabla \psi)_{[L^2(\Omega)]^2} 
    \\
    + \nu \int_\Omega |\nabla u_{\varepsilon,i}|^{p-2} \nabla u_{\varepsilon,i} \cdot \nabla \psi\,dx +
    \frac{\mu}{\tau}(\nabla(u_{\varepsilon,i} - u_{\varepsilon,i-1}), \nabla \psi)_{[L^2(\Omega)]^2} 
    \\
    + \lambda (u_{\varepsilon,i} - u_{\mathrm{org}}, \psi)_{L^2(\Omega)} = 0,
    \end{gather}
    and
  \begin{gather}
    \frac{1}{2\tau}
    |\alpha_{\varepsilon,i}-\alpha_{\varepsilon,i-1}|^2_{L^2(\Omega)}
    +\frac{\kappa}{2\tau}
    |\nabla(\alpha_{\varepsilon,i}-\alpha_{\varepsilon,i-1})|^2_{[L^2(\Omega)]^2}
    +\frac{1}{\tau}
    |u_{\varepsilon,i}-u_{\varepsilon,i-1}|^2_{L^2(\Omega)}
    \nonumber
    \\
    +\frac{\mu}{\tau}
    |\nabla(u_{\varepsilon,i}-u_{\varepsilon,i-1})|^2_{[L^2(\Omega)]^2}
    +E_\varepsilon(u_{\varepsilon,i},\alpha_{\varepsilon,i})
    \leq E_\varepsilon(u_{\varepsilon,i-1},\alpha_{\varepsilon,i-1}),\label{f-e-ineq}
    \\
      \mbox{ for all $ \varepsilon \in (0, 1) $, $ \tau \in (0, \tau_{*}(\varepsilon)) $, $ [\varphi, \psi] \in H_0^1(\Omega) \times W^{1, p}_0(\Omega) $, and $ i = 1, \dots, m $. }
    \nonumber
  \end{gather}
  {Additionally, taking into account Notation \ref{deftseq} and Example \ref{ex1}, we observe that}
  \begin{align}
    &\nabla\gamma_{\varepsilon}(R( [\underline{\alpha}_\varepsilon]_\tau(t))\nabla [\underline{u}_\varepsilon]_\tau(t))\in\partial\Phi_{\varepsilon}(R( [\underline{\alpha}_\varepsilon]_\tau(t))\nabla [\underline{u}_\varepsilon]_\tau(t))\mbox{ in }[L^2(\Omega)]^2,
    \label{subdig1}
    \\
    &\hspace{12ex}\mbox{ for any }\varepsilon\in(0,1), \mbox{ and a.e. }t\in(0,T),
    \nonumber
  \end{align}
  and hence, 
  \begin{gather}
    \nabla\gamma_{\varepsilon}(R( [\underline{\alpha}_\varepsilon]_\tau)\nabla [\underline{u}_\varepsilon]_\tau)\in\partial\widehat{\Phi}^I_{\varepsilon}(R( [\underline{\alpha}_\varepsilon]_\tau)\nabla [\underline{u}_\varepsilon]_\tau)\mbox{ in }L^2(0,T;[L^2(\Omega)]^2),
    \label{subdig2}
    \\
    \mbox{ for any }\varepsilon\in(0,1), \mbox{ and any open interval }I\subset(0,T),
    \nonumber
  \end{gather}
  where  $[{u}_\varepsilon]_\tau$, $ [\overline{u}_\varepsilon]_\tau $, and $ [\underline{u}_\varepsilon]_\tau $  are the time-interpolation of $ \{ u_{\varepsilon, i} \}_{i = 1}^m $ as in Notation \ref{deftseq}, and $[{\alpha}_\varepsilon]_\tau$, $ [\overline{\alpha}_\varepsilon]_\tau $, and $ [\underline{\alpha}_\varepsilon]_\tau $ are those of $ \{ \alpha_{\varepsilon, i} \}_{i = 1}^m $,  for $ \varepsilon \in (0, 1) $ and $ \tau \in (0, \tau_{*}(\varepsilon)) $.
\bigskip

From \eqref{ken_gamma}--\eqref{f-e-ineq}, we can see the following boundedness:
  \begin{description}
    \item[(B-1)]$\{[u_\varepsilon]_{\tau}~|~\tau\in(0,\tau_*(\varepsilon)),~\varepsilon\in(0,1)\}$ is bounded in $L^\infty(0,T;W^{1,p}_0(\Omega))$ and
    \\ 
    in $W^{1,2}(0,T;H^1_0(\Omega))$,
    \item[(B-2)]$\{[\overline{u}_\varepsilon]_{\tau}~|~\tau\in(0,\tau_*(\varepsilon)),~\varepsilon\in(0,1)\}$ and $\{[\underline{u}_\varepsilon]_{\tau}~|~\tau\in(0,\tau_*(\varepsilon)),~\varepsilon\in(0,1)\}$ are bounded in $L^\infty(0,T;W^{1,p}_0(\Omega))$,
    \item[(B-3)]$\{[\alpha_\varepsilon]_{\tau}~|~\tau\in(0,\tau_*(\varepsilon)),~\varepsilon\in(0,1)\}$ is bounded in $W^{1,2}(0,T;H^1_0(\Omega))$,
    \item[(B-4)]$\{[\overline{\alpha}_\varepsilon]_{\tau}~|~\tau\in(0,\tau_*(\varepsilon)),~\varepsilon\in(0,1)\}$ and $\{[\underline{\alpha}_\varepsilon]_{\tau}~|~\tau\in(0,\tau_*(\varepsilon)),~\varepsilon\in(0,1)\}$ are bounded in $L^\infty(0,T;H^1_0(\Omega))$,
    \item[(B-5)] $\{\nabla\gamma_{\varepsilon}(R( [\underline{\alpha}_\varepsilon]_\tau)\nabla [\underline{u}_\varepsilon]_\tau)~|~\tau\in(0,\tau_*(\varepsilon)),~\varepsilon\in(0,1)\}$ is bounded in $L^\infty(Q;\R^2)$,
    \item[(B-6)]The function of time $t\in[0,T]\mapsto E_\varepsilon([\overline{u}_\varepsilon]_{\tau }(t),[\overline{\alpha}_\varepsilon]_{\tau}(t))\in[0,\infty)$ and $t\in [0,T]\mapsto E_\varepsilon([\underline{u}_\varepsilon]_{\tau}(t),[\underline{\alpha}_\varepsilon]_{\tau}(t))\in[0,\infty)$ are nonincreasing for every $0<\varepsilon<1$ and $0<\tau<\tau_*(\varepsilon)$. Moreover, $\{E_\varepsilon({u}_{0},{\alpha}_{0})~|~\varepsilon\in(0,1)\}$ is bounded, and hence, the class $\{E_\varepsilon([\overline{u}_\varepsilon]_{\tau},[\overline{\alpha}_\varepsilon]_{\tau}),~E_\varepsilon([\underline{u}_\varepsilon]_{\tau},[\underline{\alpha}_\varepsilon]_{\tau})~|~\tau\in(0,\tau_*(\varepsilon)),~\varepsilon\in(0,1)\} $ is bounded in $BV(0,T)$.
  \end{description}
On account of (B-1)--(B-5) and 2D-embedding $ W^{1, p}(\Omega) \subset C(\overline{\Omega}) $  with $  p > 2 $, 
we can apply the compactness theory of Aubin's type \cite[Corollary 4]{MR0916688}, and find a sequence $\{\varepsilon_n\}_{n\in\N}\subset(0,1)$ with $ \{ \tau_n \}_{n\in\N} \subset (0, 1) $, and a pair of functions $[u,{\alpha}]\in[L^2(0,T;L^2(\Omega))]^2$ with $\bm{w}^*\in L^\infty(Q;\R^2)$ such that: 
\begin{gather*}
    \varepsilon_n \downarrow 0, ~ \tau_n := \frac{1}{2} \bigl( \tau_*(\varepsilon_n) \wedge \varepsilon_n \wedge 1 \bigr) \downarrow 0, \mbox{ as $ n \to \infty $,}
\end{gather*}
  \begin{align}
    & u_n : =  [u_{\varepsilon_n}]_{\tau_n}\rightarrow u\mbox{ in }C(\overline{Q}),\mbox{ weakly in } W^{ 1 , 2 }( 0,T ; H^1_0 ( \Omega)),
    \nonumber 
    \\
      &\hspace*{10ex}\mbox{ weakly-}* \mbox{ in } L^\infty( 0,T ; W^{ 1 , p }_0 ( \Omega ) ) , \mbox{ as $ n \to \infty $,} 
    \label{conv_1}
    \\[1ex]
    &\alpha_n:=[\alpha_{\varepsilon_n}]_{\tau_n}\rightarrow\alpha\mbox{ in }C([0,T];L^2(\Omega)),\mbox{ weakly in }W^{1,2}(0,T;H^1_0(\Omega)),
    \nonumber
    \\
      &\hspace*{10ex}\mbox{ weakly-}*\mbox{in }L^\infty(0,T;H^1_0(\Omega)), \mbox{ as $ n \to \infty $,}
    \label{conv_2}
  \end{align}
  and
  \begin{align}
      &\nabla\gamma_{\varepsilon_n}(R( [\underline{\alpha}_{\varepsilon_n}]_{\tau_n})\nabla [\underline{u}_{\varepsilon_n}]_{\tau_n})~\rightarrow\bm{w}^*\mbox{ weakly-}*\mbox{in }L^\infty(Q;\R^2), \mbox{ as $ n \to \infty $,}
    \label{conv_40}
  \end{align}
  in particular,
  \begin{align}\label{conv_100}
    [u(0),\alpha(0)]=\lim_{n\rightarrow\infty}[u_n(0),\alpha_n(0)]=[u_0,\alpha_0]\mbox{ in }[L^2(\Omega)]^2.
  \end{align}
  Here, since
  \begin{align*}
    &\left\{
    \begin{aligned}
      &\max\{|([\overline{u}_{\varepsilon_n}]_{\tau_n}-u_{n})(t)|_{H^1(\Omega)},|([\underline{u}_{\varepsilon_n}]_{\tau_n}-u_{n})(t)|_{H^1(\Omega)}\}
      \\
      &\quad\leq \int_{t_{i-1}}^{t_i}|\partial_tu_{n}(t)|_{H^1(\Omega)}\,dt\leq \tau_n^{\frac{1}{2}}|\partial_tu_{n}|_{L^2(0,T;H^1(\Omega))},
      \\
      &\max\{|([\overline{\alpha}_{\varepsilon_n}]_{\tau_n}-\alpha_{n})(t)|_{H^1(\Omega)},|([\underline{\alpha}_{\varepsilon_n}]_{\tau_n}-\alpha_{n})(t)|_{H^1(\Omega)}\}
      \\
      &\quad\leq \int_{t_{i-1}}^{t_i}|\partial_t\alpha_{n}(t)|_{H^1(\Omega)}\,dt\leq \tau_n^{\frac{1}{2}}|\partial_t\alpha_{n}|_{L^2(0,T;H^1(\Omega))}, 
    \end{aligned}
    \right.
    \\
    &\qquad \mbox{ for }t\in[t_{i-1},t_i),~i=1,\dots,m, \mbox{ and }\tau\in(0,\tau_*(\varepsilon)),
  \end{align*}
  one can also see from \eqref{conv_1} and \eqref{conv_2} that:
  \begin{align}
    &\overline{u}_{n}:=[\overline{u}_{\varepsilon_n}]_{\tau_n}\rightarrow u,~\underline{u}_{n}:=[\underline{u}_{\varepsilon_n}]_{\tau_n}\rightarrow u \mbox{ in }L^\infty(0,T;L^2(\Omega)),
    \nonumber
    \\
    &\hspace*{10ex}\mbox{ weakly-}* \mbox{ in }L^\infty(0,T;W^{1,p}_0(\Omega)),
    \label{conv_3}
    \\[1ex]
    &\overline{\alpha}_{n}:=[\overline{\alpha}_{\varepsilon_n}]_{\tau_n}\rightarrow\alpha,~\underline{\alpha}_{n}:=[\underline{\alpha}_{\varepsilon_n}]_{\tau_n}\rightarrow\alpha\mbox{ in }L^\infty ( 0,T ; L^2 ( \Omega ) ),
    \nonumber
    \\
    &\hspace*{10ex}\mbox{ weakly-}*\mbox{ in }L^\infty(0,T;H^1_0(\Omega)),
    \label{conv_4}
  \end{align}
and in particular,
\begin{align}\label{conv_7}
  &\left\{
    \begin{aligned}
      &\overline{u}_{n}(t)\rightarrow u(t), \underline{u}_{n}(t)\rightarrow u(t)\mbox{ in }L^p(\Omega) \mbox{ and weakly in }W^{1,p}_0(\Omega),
      \\
      &\overline{\alpha}_{n}(t)\rightarrow \alpha(t), \underline{\alpha}_{n}\rightarrow \alpha(t)\mbox{ in }L^2(\Omega) \mbox{ and weakly in }H^1_0(\Omega),
    \end{aligned}\right.
    \\
    &\hspace{15ex}\mbox{ as } n\rightarrow\infty,\mbox{ for any }t\in(0,T).
    \nonumber
\end{align}
Moreover, \eqref{constc_E} and (B-6) enable us to see
\begin{gather}\label{conv_7-0}
    \bigl| E_{\varepsilon_n}(\overline{u}_n, \overline{\alpha}_n) -E_{\varepsilon_n}(\underline{u}_n, \underline{\alpha}_n) \bigr|_{L^1(0, T)} \leq 2c_E \tau_n \to 0, \mbox{ as $ n \to \infty $}.
\end{gather}
So, applying Helly's selection theorem \cite[Chapter 7, p.167]{rudin1976principles}, we will find a bounded and nonincreasing function $\mathcal{J}_*:[0,T]\mapsto[0,\infty)$, such that 
\begin{align}
    &E_{\varepsilon_n}(\overline{u}_{n},\overline{\alpha}_{n})\rightarrow\mathcal{J}_* \mbox{ and } E_{\varepsilon_n}(\underline{u}_{n},\underline{\alpha}_{n})\rightarrow\mathcal{J}_* \nonumber
  \\
  & \qquad \mbox{ weakly-}*\mbox{ in }BV(0,T),\mbox{ and }\mbox{weakly-}* \mbox{ in }L^\infty(0,T),
  \\
    &E_{\varepsilon_n} (\overline{u}_{n}(t),\overline{\alpha}_{n}(t)) \rightarrow \mathcal{J}_*(t) \mbox{ and } E_{\varepsilon_n} (\underline{u}_{n}(t),\underline{\alpha}_{n}(t)) \rightarrow \mathcal{J}_*(t), \mbox{ for any }t\in[0,T], \label{conv_7-1}
\end{align}
as $ n \to \infty $, by taking a subsequence if necessary.

Now, let us show the pair of function $[u,\alpha]$ is a solution to the system (S). (S4) can be checked by \eqref{conv_100}. Next, let us show $[u,\alpha]$ satisfies the variational inequalities (S1) and (S2). Let us take any $ t \in [0, T] $. Then, from \eqref{ken_varIneq01} and \eqref{ken_varIneq02}, the sequences as in \eqref{conv_1}--\eqref{conv_4} satisfy the following inequalities:
\begin{align}
  &\int_0^t (\partial_t \alpha_{n} (\sigma),\omega(\sigma))_{L^2(\Omega)}\,d\sigma +\int_{0}^{t}(\nabla\overline{\alpha}_{n}(\sigma),\nabla\omega(\sigma))_{[L^2(\Omega)]^2} \,d\sigma
  \nonumber
  \\
  &\quad + \int_0^t\int_\Omega\nabla\gamma_{\varepsilon_n}(R(\underline{\alpha}_{n}(\sigma))\nabla\underline{u}_{n}(\sigma))\cdot R\left(\underline{\alpha}_{n}(\sigma)+{\ts\frac{\pi}{2}}\right)\nabla \underline{u}_{n}(\sigma)\omega(\sigma)\,dxd\sigma
  \nonumber 
  \\ 
  &\quad+\kappa\int_0^t (\nabla\partial_t \alpha_{n}(\sigma),\nabla\omega(\sigma))_{[L^2(\Omega)]^2 } \, d\sigma= 0,\mbox{ for any }\omega\in L^2(0,T;H^1_0(\Omega)),
  \label{conv_8}
\end{align}
and, 
  \begin{align}
    &\int_0^t (\partial_tu_n(\sigma),\overline{u}_n(\sigma)-w(\sigma))_{L^2(\Omega)}\,d\sigma+\mu\int_0^t(\nabla\partial_tu_n(\sigma),\nabla(\overline{u}_n-w)(\sigma))_{[L^2(\Omega)]^2}\,d\sigma
    \nonumber
    \\
    &\quad+{\nu}\int_0^t\int_{\Omega}|\nabla \overline{u}_n(\sigma)|^{p-2}\nabla \overline{u}_n(\sigma)\cdot\nabla(\overline{u}_{n}-w)(\sigma)\,dxd\sigma
    \nonumber
    \\
    &\quad+\lambda\int_0^t(\overline{u}_{n}(\sigma)-u_{\mathrm{org}},(\overline{u}_{n}-w)(\sigma))_{L^2(\Omega)}\,d\sigma
    \nonumber
    \\
    &\quad+\int_0^t\int_\Omega\gamma_{\varepsilon_n}(R(\overline{\alpha}_{n}(\sigma))\nabla\overline{u}_{n} (\sigma))\,dxd\sigma\leq\int_0^t\int_\Omega\gamma_{\varepsilon_n}(R(\overline{\alpha}_{n}(\sigma))\nabla w(\sigma))\,dxd\sigma,
    \nonumber
    \\
    &\hspace{25ex}\mbox{ for any }w\in L^2(0,T;W^{1,p}_0(\Omega)).\label{conv_9}
  \end{align}
  Here, from \eqref{conv_3} and \eqref{conv_4} it is seen that
  \begin{align}
    &\liminf_{n\rightarrow\infty}\int_0^t|\nabla\overline{u}_n(\sigma)|_{[L^p(\Omega)]^2}^p\,d\sigma\geq\int_0^t|\nabla{u}(\sigma)|_{[L^p(\Omega)]^2}^p\,d\sigma,
    \label{conv_10}
    \\
    &\liminf_{n\rightarrow\infty}\int_0^t|\nabla\overline{\alpha}_n(\sigma)|_{[L^2(\Omega)]^2}^2\,d\sigma\geq\int_0^t|\nabla{\alpha}(\sigma)|_{[L^2(\Omega)]^2}^2\,d\sigma,
    \label{conv_23}
  \end{align}
  Also, using \eqref{ken_gamma}, \eqref{conv_3} and \eqref{conv_4}, weakly lower-semicontinuity of $\gamma_\varepsilon$ and Fatou's lemma, one can see  
  \begin{align}
      & \liminf_{n\rightarrow\infty}\int_{0}^t\int_\Omega\gamma_{\varepsilon_n}(R(\overline{\alpha}_{n}(\sigma))\nabla\overline{u}_{n}(\sigma))\,dxd\sigma 
      \nonumber
      \\
      & \geq 
    \liminf_{n\rightarrow\infty}\int_{0}^{t}\int_\Omega\gamma(R(\overline{\alpha}_n(\sigma)\nabla \overline{u}_n(\sigma)))\,dxd\sigma
      \nonumber
      \\
      & \qquad +\lim_{n\rightarrow\infty}|\gamma_{\varepsilon_n}-\gamma|_{L^\infty(\R^2)}\int_{0}^{t}\int_\Omega1\,dxd\sigma
    \nonumber
    \\
    & \geq \int_0^t\int_\Omega\gamma(R({\alpha}(\sigma))\nabla{u}(\sigma))\,dxd\sigma.
    \label{conv_48} 
  \end{align}
  {Furthermore, putting $m_t:=\bigl([\frac{t}{\tau}]+1\bigr)\land \frac{T}{\tau}$ {with use of the integral part $ [\frac{t}{\tau}] \in \mathbb{Z} $ of $ \frac{t}{\tau} \in \R $,} we observe that}
  \begin{align*}
    &\int_0^t(\nabla\partial_tu_n(\sigma),\nabla\overline{u}_n(\sigma))_{[L^2(\Omega)]^2}\,d\sigma
    \\
    &=\int_{0}^{t_{{m}_t}}(\nabla\partial_tu_n(\sigma),\nabla\overline{u}_n(\sigma))_{[L^2(\Omega)]^2}\,d\sigma-\int_{t}^{t_{m_t}}(\nabla\partial_tu_n(\sigma),\nabla\overline{u}_n(\sigma))_{[L^2(\Omega)]^2}\,d\sigma
    \\
    &\geq\sum^{m_t}_{i=1}\frac{1}{2}(|\nabla u_n(t_i)|_{[L^2(\Omega)]^2}^2-|\nabla u_n(t_{i-1})|_{ [L^2(\Omega)]^2}^2)
    \\
    &\qquad-|\Omega|^{\frac{p-2}{2p}}|\nabla\overline{u}_n|_{L^\infty(0,T;[L^p(\Omega)]^2)}\int_{t}^{t_{m_t}}|\nabla\partial_tu_n(\sigma)|_{[L^2(\Omega)]^2}\,d\sigma
    \\
    &\geq\frac{1}{2}(|\nabla u_n(t_{m_t})|_{[L^2(\Omega)]^2}^2-|\nabla u_n(0)|_{ [L^2(\Omega)]^2}^2)
    \\
    &\quad-\tau_n^\frac{1}{2}|\Omega|^{\frac{p-2}{2p}}|\nabla\overline{u}_n|_{L^\infty(0,T;[L^p(\Omega)]^2)}|\nabla\partial_tu_n|_{L^2(0,T;[L^2(\Omega)]^2)}
    \\
    &=\frac{1}{2}(|\nabla\overline{u}_n(t)|^2_{[L^2(\Omega)]^2}-|\nabla u_0|_{[L^2(\Omega)]^2}^2)
      \\
      & \quad -\tau_n^\frac{1}{2}|\Omega|^{\frac{p-2}{2p}}|\nabla\overline{u}_n|_{L^\infty(0,T;[L^p(\Omega)]^2)}|\nabla\partial_tu_n|_{L^2(0,T;[L^2(\Omega)]^2)},
  \end{align*}
  and similarly,
  \begin{align*}
    &\int_0^t(\nabla\partial_t\alpha_n(\sigma),\nabla\overline{\alpha}_n(\sigma))_{[L^2(\Omega)]^2}\,d\sigma
    \\
    &=\frac{1}{2}(|\nabla\overline{\alpha}_n(t)|^2_{[L^2(\Omega)]^2}-|\nabla \alpha_0|_{[L^2(\Omega)]^2}^2)-\tau_n^\frac{1}{2}|\nabla\overline{\alpha}_n|_{L^\infty(0,T;[L^2(\Omega)]^2)}|\nabla\partial_t\alpha_n|_{L^2(0,T;[L^2(\Omega)]^2)},
      \\
      &\hspace{25ex} \mbox{for $ n = 1, 2, 3, \dots $.}
  \end{align*}
  Hence, by \eqref{conv_7}, we obtain
  \begin{align}
    &\liminf_{n\rightarrow\infty}\int_0^t(\nabla\partial_tu_n(\sigma),\nabla\overline{u}_n(\sigma))_{[L^2(\Omega)]^2}\,d\sigma
    \label{conv_12}
    \\
    &\quad\geq \frac{1}{2}(|\nabla{u}(t)|^2_{[L^2(\Omega)]^2}-|\nabla u_0|_{[L^2(\Omega)]^2}^2)=\int_{0}^{t}(\nabla\partial_tu(\sigma),\nabla u(\sigma))_{[L^2(\Omega)]^2}\,d\sigma,
    \nonumber
    \\
    &\liminf_{n\rightarrow\infty}\int_0^t(\nabla\partial_t\alpha_n(\sigma),\nabla\overline{\alpha}_n(\sigma))_{[L^2(\Omega)]^2}\,d\sigma
    \label{conv_13}
    \\
    &\quad\geq\frac{1}{2}(|\nabla{\alpha}(t)|^2_{[L^2(\Omega)]^2}-|\nabla \alpha_0|_{[L^2(\Omega)]^2}^2)=\int_{0}^{t}(\nabla\partial_t\alpha(\sigma),\nabla \alpha(\sigma))_{[L^2(\Omega)]^2}\,d\sigma.
    \nonumber
  \end{align}
  {On the other hand, by putting $w=u$ in \eqref{conv_9} and using \eqref{conv_1}, \eqref{conv_3} and \eqref{conv_4}, we see that}
   \begin{align}
    &\limsup_{n\rightarrow\infty} 
    \biggl( 
      \frac{\nu}{p}\int_0^t|\nabla\overline{u}_n(\sigma)|_{[L^p(\Omega)]^2}^p\,d\sigma+\int_0^t\int_\Omega\gamma_{\varepsilon_n}(R( \overline{\alpha}_n(\sigma))\nabla\overline{u}_n(\sigma))\,dxd\sigma
      \nonumber
      \\
     & \hspace{1ex}
        +\frac{\mu}{2}(|\nabla\overline{u}_n(t)|_{[L^2(\Omega)]^2}^2-|\nabla u_0|_{[L^2(\Omega)]^2}^2)-\tau_n^\frac{1}{2}|\Omega|^\frac{p-2}{2p}|\nabla\overline{u}_n|_{L^\infty(0,T;[L^p(\Omega)]^2)}|\nabla\partial_tu_n|_{L^2(0,T;[L^2(\Omega)]^2)}
    \biggr)
    \nonumber
    \\
    &\leq \limsup_{n\rightarrow\infty} 
    \biggl( 
      \frac{\nu}{p}\int_0^t|\nabla\overline{u}_n(\sigma)|_{[L^p(\Omega)]^2}^p\,d\sigma+\int_0^t\int_\Omega\gamma_{\varepsilon_n}(R( \overline{\alpha}_n(\sigma))\nabla\overline{u}_n(\sigma))\,dxd\sigma
      \nonumber
      \\
     & \hspace{15ex}
        +\mu\int_0^t(\nabla\partial_tu_n(\sigma),\nabla\overline{u}_n(\sigma))_{[L^2(\Omega)]^2}\,d\sigma
    \biggr)\label{conv_22}
    \\
    & \leq - \lim_{n\rightarrow\infty}\int_0^t (\partial_tu_n(\sigma)+\lambda(\overline{u}_n(\sigma)-u_{\mathrm{org}}),(\overline{u}_n-u)(\sigma))_{L^2(\Omega)}\,d\sigma 
    \nonumber
    \\
    &\quad+\lim_{n\rightarrow\infty}\mu\int_0^t(\nabla\partial_tu_n(\sigma),\nabla u(\sigma))_{[L^2(\Omega)]^2}\,d\sigma+\frac{\nu}{p}\int_0^t|\nabla{u}(\sigma)|_{[L^p( \Omega )]^2}^p\,d\sigma
    \nonumber
    \\
    &\quad+\lim_{n\rightarrow\infty}\int_0^t\int_\Omega\gamma_{\varepsilon_n}(R(\overline{\alpha}_n(\sigma))\nabla u(\sigma))\,dxd\sigma
    \nonumber
    \\
    &=\frac{\nu}{p}\int_0^t|\nabla{u}(\sigma)|_{[L^p( \Omega )]^2}^p\,d\sigma+\int_{0}^t\int_\Omega\gamma(R({\alpha}(\sigma))\nabla{u}(\sigma))\,dxd\sigma
    \nonumber
    \\
    &\quad + \mu\int_0^t(\nabla\partial_tu(\sigma),\nabla u(\sigma))_{[L^2(\Omega)]^2}\,d\sigma
    \nonumber
    \\
    &=\frac{\nu}{p}\int_0^t|\nabla{u}(\sigma)|_{[L^p( \Omega )]^2}^p\,d\sigma+\int_{0}^t\int_\Omega\gamma(R({\alpha}(\sigma))\nabla{u}(\sigma))\,dxd\sigma
    \nonumber
    \\
    &\quad +\frac{\mu}{2}(|\nabla{u}(t)|^2_{[L^2(\Omega)]^2}-|\nabla u_0|_{[L^2(\Omega)]^2}^2).
    \nonumber
   \end{align}
   Therefore, from (Fact 1) in Section 1, \eqref{conv_10}, \eqref{conv_48}, \eqref{conv_12} and \eqref{conv_22}, we can derive the following convergences as $n\rightarrow\infty$:
   \begin{align}
      &\int_0^t|\nabla\overline{u}_n(\sigma)|_{[L^p(\Omega)]^2}^p\,d\sigma\rightarrow\int_0^t|\nabla{u}(\sigma)|_{[L^p(\Omega)]^2}^p\,d\sigma,
      \label{conv_14}
    \\
    &\int_0^t(\nabla\partial_tu_n(\sigma),\nabla\overline{u}_n(\sigma))_{[L^2(\Omega)]^2}\,d\sigma\rightarrow\int_0^t(\nabla\partial_tu(\sigma),\nabla{u}(\sigma))_{[L^2(\Omega)]^2}\,d\sigma,
    \label{conv_16}
    \\
       &|\nabla\overline{u}_n(t)|_{[L^2(\Omega)]^2}^2\rightarrow|\nabla{u}(t)|_{[L^2(\Omega)]^2}^2, \mbox{ for any $ t \in [0, T] $.}
      \label{conv_17}
   \end{align} 
  Moreover, owing to \eqref{ken_gamma}, \eqref{constc_E}, (B-6), and uniform convexity of $ L^2 $- and $ L^p $-based topologies, we can infer that:
  \begin{gather}\label{conv_7-2}
      \hspace{-6ex}\begin{cases}
          \hspace{-2ex}
          \parbox{12cm}{
              \vspace{-2ex}
              \begin{itemize}
                  \item $ \overline{u}_n \to u \mbox{ and } \underline{u}_n \to u $  in $ L^p(0, T; W_0^{1, p}(\Omega)) $,
                      \\
                      with $ \bigl| |\nabla \overline{u}_n|_{L^p(0, T; [L^p(\Omega)]^2)}^p -|\nabla \underline{u}_n|_{L^p(0, T; [L^p(\Omega)]^2)}^p \bigr| \leq \frac{2 p c_E}{\nu} \tau_n \to 0 $,
                  \item $ \overline{u}_n(t) \to u(t) \mbox{ and } \underline{u}_n(t) \to u(t) $  in $ W_0^{1, p}(\Omega) $, a.e. $ t \in (0, T) $,
              \end{itemize}
              \vspace{-2ex}
          }
      \end{cases} \mbox{as $ n \to \infty $,}
  \end{gather}
  and
  \begin{gather}\label{conv_7-3}
      \begin{cases}
          \hspace{-2ex}
          \parbox{14cm}{
              \vspace{-2ex}
              \begin{itemize}
                  \item $ \overline{u}_n(t) \to u(t) $ and $ \underline{u}_n(t) \to u(t) $  in $ H_0^{1}(\Omega) $, with
                      \\
                      $ \bigl| \nabla (\overline{u}_n -\underline{u}_n)(t)\bigr|_{[L^2(\Omega)]^2} \leq {\tau_n}^{\frac{1}{2}} |\nabla \partial_t u_n|_{L^2(0, T; [L^2(\Omega)]^2)} \to 0 $,
                  \item $ \bigl| \gamma_{\varepsilon_n}(R(\overline{\alpha}_n(t)) \nabla \overline{u}_n(t)) -\gamma(R(\alpha(t)) \nabla u(t)) \bigr|_{L^1(\Omega)} $
                      \\
                      $ \leq |\gamma_{\varepsilon_n}-\gamma|_{L^\infty(\R^2)}|\nabla\overline{u}_n(t)|_{[L^2(\Omega)]^2}|\Omega|^{\frac{1}{2}}+4|\nabla \gamma|_{L^\infty(\R^2; \R^2)}|\nabla\overline{u}_n(t)|_{[L^2(\Omega)]^2}\cdot
                      \\
                      \cdot|(\alpha_n-\alpha)(t)|_{L^2(\Omega)}+|\nabla \gamma|_{L^\infty(\R^2; \R^2)}|\Omega|^{\frac{1}{2}}|\nabla (\overline{u}_n -u)(t)|_{[L^2(\Omega)]^2} \to 0$,\\[1ex]
                      as well as $ \bigl| \gamma_{\varepsilon_n}(R(\underline{\alpha}_n(t)) \nabla \underline{u}_n(t)) -\gamma(R(\alpha(t)) \nabla u(t)) \bigr|_{L^1(\Omega)} \to 0$, 
              \end{itemize}
              \vspace{-2ex}
          }
      \end{cases}
      \\
      \mbox{for any $ t \in [0, T] $, as $ n \to \infty $.}
  \end{gather}
 \eqref{energy01}, \eqref{E_eps}, \eqref{conv_7}, \eqref{conv_7-0}, \eqref{conv_7-1}, \eqref{conv_7-2}, and \eqref{conv_7-3} imply
 \begin{gather}\label{convJ}
     \begin{cases}
         E_{\varepsilon_n}(\overline{u}_n(t), \overline{\alpha}_n(t)) \to \mathcal{J}_*(t) = E(u(t), \alpha(t)),
         \\
         E_{\varepsilon_n}(\underline{u}_n(t), \underline{\alpha}_n(t)) \to \mathcal{J}_*(t) = E(u(t), \alpha(t)), 
     \end{cases}
     \mbox{a.e. $ t \in (0, T) $, as $ n \to \infty $.}
 \end{gather}

  On the other hand, if we take $\omega=\overline{\alpha}_n-\alpha$ in \eqref{conv_8}, then having in mind \eqref{conv_2}, \eqref{conv_4}, \eqref{conv_13} and \eqref{conv_14}, we see that
   \begin{align}
    &\limsup_{n\rightarrow\infty}
    \biggl( 
      \frac{1}{2}\int_0^t|\nabla\overline{\alpha}_n(\sigma)|_{[L^2(\Omega)]^2}^2\,d\sigma+\frac{\kappa}{2}(|\nabla\overline{\alpha}_n(t)|_{[L^2(\Omega)]^2}^2-|\nabla \alpha_0|_{[L^2(\Omega)]^2}^2)
      \nonumber
      \\
     & \hspace{15ex}
        -\tau_n^\frac{1}{2}|\nabla\overline{\alpha}_n|_{L^\infty(0,T;[L^2(\Omega)]^2)}|\nabla\partial_t\alpha_n|_{L^2(0,T;[L^2(\Omega)]^2)}
    \biggr)
    \nonumber
    \\
    &\leq \limsup_{n\rightarrow\infty} 
    \biggl( 
      \frac{1}{2}\int_0^t|\nabla\overline{\alpha}_n(\sigma)|_{[L^2(\Omega)]^2}^2\,d\sigma+\kappa\int_0^t (\nabla\partial_t \alpha_n(\sigma),\nabla\overline{\alpha}_n(\sigma))_{[L^2(\Omega)]^2 } \, d\sigma
    \biggr)
    \label{conv_20}
    \\
    &\leq -\lim_{n\rightarrow\infty}\int_0^t\int_\Omega\nabla\gamma_{\varepsilon_n}(R(\underline{\alpha}_n(\sigma))\nabla\underline{u}_n(\sigma))\cdot R\left(\underline{\alpha}_n(\sigma)+\ts\frac{\pi}{2}\right)\nabla \underline{u}_n(\sigma)(\overline{\alpha}_n-\alpha)(\sigma)\,dxd\sigma
    \nonumber
    \\
    &\quad - \lim_{n\rightarrow\infty}\int_0^t (\partial_t \alpha_n (\sigma),(\overline{\alpha}_n-\alpha)(\sigma))_{L^2(\Omega)}\,d\sigma+\frac{1}{2}\int_{0}^{t}|\nabla\alpha(\sigma)|^2_{[L^2(\Omega)]^2}\,d\sigma
    \nonumber
    \\
    &\quad + \lim_{n\rightarrow\infty}\kappa\int_0^t (\nabla\partial_t \alpha_n(\sigma),\nabla{\alpha}(\sigma))_{[L^2(\Omega)]^2 } \, d\sigma
    \nonumber
    \\
    &=\frac{1}{2}\int_{0}^{t}|\nabla\alpha(\sigma)|^2_{[L^2(\Omega)]^2}\,d\sigma+\kappa\int_0^t (\nabla\partial_t \alpha(\sigma),\nabla{\alpha}(\sigma))_{[L^2(\Omega)]^2 } \, d\sigma
    \nonumber
    \\
    &=\frac{1}{2}\int_{0}^{t}|\nabla\alpha(\sigma)|^2_{[L^2(\Omega)]^2}\,d\sigma+\frac{\kappa}{2}(|\nabla{\alpha}(t)|^2_{[L^2(\Omega)]^2}-|\nabla \alpha_0|_{[L^2(\Omega)]^2}^2).
    \nonumber
   \end{align}
   Therefore, from (Fact 1) in {Section 1}, \eqref{conv_23}, \eqref{conv_13} and \eqref{conv_20}, we can derive the following convergences as $n\rightarrow\infty$:
   \begin{align*}
    &\int_0^t|\nabla\overline{\alpha}_n(\sigma)|_{[L^2(\Omega)]^2}^2\,d\sigma\rightarrow\int_0^t|\nabla{\alpha}(\sigma)|_{[L^2(\Omega)]^2}^2\,d\sigma,
  \\
  &\int_0^t(\nabla\partial_t\alpha_n(\sigma),\nabla\overline{\alpha}_n(\sigma))_{[L^2(\Omega)]^2}\,d\sigma\rightarrow\int_0^t(\nabla\partial_t\alpha(\sigma),\nabla{\alpha}(\sigma))_{[L^2(\Omega)]^2}\,d\sigma,
  \\
    &|\nabla\overline{\alpha}_n(t)|_{[L^2(\Omega)]^2}^2\rightarrow|\nabla{\alpha}(t)|_{[L^2(\Omega)]^2}^2,\mbox{ and therefore, }\overline{\alpha}_n(t)\rightarrow \alpha(t)\mbox{ in }H^1_0(\Omega),
 \end{align*}
 for any $t\in[0,T]$. 

 {Now, set $w=\psi$ in $W^{1,p}_0(\Omega)$ in \eqref{conv_9} and $\omega=\varphi$ in $H^{1}_0(\Omega)$ in \eqref{conv_8}. In view of \eqref{conv_1}, \eqref{conv_3}, \eqref{conv_4}, \eqref{conv_14} and \eqref{conv_16}, letting $ n \rightarrow \infty $ yields that}
    \begin{align}
      &\int_I(\partial_tu(t),u(t)-\psi)_{L^2(\Omega)}\,dt+\lambda\int_I(u(t)-u_\mathrm{org},u(t)-\psi)_{L^2(\Omega)}\,dt
      \nonumber
      \\
      &\quad+\mu\int_I(\nabla\partial_tu(t),\nabla(u(t)-\psi))_{[L^2(\Omega)]^2}\,dt
      \nonumber
      \\
      &\quad+\nu\int_I\int_{\Omega}|\nabla u(t)|^{p-2}\nabla u(t)\cdot\nabla(u(t)-\psi)\,dxdt
      \nonumber
      \\
      &\quad+\int_I\int_{\Omega}\gamma(R(\alpha(t))\nabla u(t))\,dxdt\leq\int_I\int_{\Omega}\gamma (R(\alpha(t))\nabla\psi)\,dxdt,
      \label{conv_28}
    \end{align}
    {for any open interval $I\subset(0,T)$. Also, using} \eqref{conv_2}, \eqref{conv_4}, and \eqref{conv_14}, we let $ n \rightarrow \infty $ and obtain that 
    \begin{align}
      &\int_I (\partial_t \alpha (t),\varphi)_{L^2(\Omega)}\,dt +\int_I(\nabla{\alpha}(t),\nabla\varphi)_{[L^2(\Omega)]^2} \,dt+\kappa\int_I (\nabla\partial_t \alpha(t),\nabla\varphi)_{[L^2(\Omega)]^2 } \, dt
      \nonumber
      \\
      &\quad + \int_I\int_\Omega\bm{w}^*(t)\cdot R\left({\alpha}(t)+\ts\frac{\pi}{2}\right)\nabla{u}(t)\varphi\,dxdt= 0, 
      \label{conv_29}
    \end{align}\noeqref{conv_29}{for any open interval $I\subset(0,T)$.} Furthermore, invoking \eqref{subdig1}, \eqref{subdig2}, \eqref{conv_40}, Example \ref{ex2}, and (Fact 3) in Remark \ref{rem2}, one can say that 
    \begin{equation*}
     \bm{w}^*\in\partial\widehat{\Phi}_0^I(R(\alpha)\nabla u)\mbox{ in }L^2(0,T;[L^2(\Omega)]^2),
    \end{equation*}
 and hence, 
 \begin{equation}\label{subdig3}
  \begin{aligned}
    &\bm{w}^*\in\partial\Phi_0(R(\alpha)\nabla u) \mbox{ in } [L^2(\Omega)]^2, \mbox{ for a.e. }t\in(0,T),\mbox{ and }
    \\
    &\bm{w}^*\in\partial\gamma(R(\alpha)\nabla u) \mbox{ in }\R^2, \mbox{ a.e. in } Q.
  \end{aligned}
 \end{equation}
\eqref{conv_28}--\eqref{subdig3} imply that the pair $[u,\alpha]$ satisfies (S1) and (S2). 

    Next, we consider the energy inequality. From \eqref{f-e-ineq}, it is derived that 
    \begin{align}
      &\int_{t_{i-1}}^{t_i}\biggl(\frac{1}{2}|\partial_t\alpha_n(\sigma)|^2_{L^2(\Omega)}+\frac{\kappa}{2}|\nabla\partial_t\alpha_n(\sigma)|^2_{[L^2(\Omega)]^2}+|\partial_tu_n(\sigma)|^2_{L^2(\Omega)}+{\mu}|\nabla\partial_tu_n(\sigma)|^2_{[L^2(\Omega)]^2}\biggr)\,d\sigma
      \nonumber
      \\
      &\quad+E_{\varepsilon_n}(\overline{u}_n(t),\overline{\alpha}_n(t))\leq E_{\varepsilon_n}(\underline{u}_n(t),\underline{\alpha}_n(t)),\mbox{ for }t\in[t_{i-1},t_i),~i=1,2,\dots,{\ts\frac{T}{\tau_n}},\mbox{ and }n\in\N.
      \label{energy1}
    \end{align}
    {Here, putting $m^s:=[\frac{s}{\tau}]$ and $m_t:=\bigl([\frac{t}{\tau}]+1\bigr)\land \frac{T}{\tau}$ for $0\leq s< t\leq T$, and summing up the both side of \eqref{energy1} for $i=m^s+1, m^s+2,\dots,m_t$, we obtain that}
    \begin{align}
      &\frac{1}{2}\int_s^t \left( |\partial_t\alpha_n(\sigma)|^2_{L^2(\Omega)}+{\kappa}|\nabla\partial_t\alpha_n(\sigma)|^2_{[L^2(\Omega)]^2}\right)\,d\sigma
      \nonumber
      \\
      &\quad+\int_s^t \left(|\partial_tu_n(\sigma)|^2_{L^2(\Omega)}+{\mu}|\nabla\partial_tu_n(\sigma)|^2_{[L^2(\Omega)]^2} \right)\,d\sigma+E_{\varepsilon_n}(\overline{u}_n(t),\overline{\alpha}_n(t))
      \nonumber
      \\
      &\leq \frac{1}{2}\int_{m^s\tau_n}^{m_t\tau_n} \left( |\partial_t\alpha_n(\sigma)|^2_{L^2(\Omega)}+{\kappa}|\nabla\partial_t\alpha_n(\sigma)|^2_{[L^2(\Omega)]^2}\right)\,d\sigma
      \nonumber
      \\
      &\quad+\int_{m^s\tau_n}^{m_t\tau_n} \left(|\partial_tu_n(\sigma)|^2_{L^2(\Omega)}+{\mu}|\nabla\partial_tu_n(\sigma)|^2_{[L^2(\Omega)]^2} \right)\,d\sigma+E_{\varepsilon_n}(\overline{u}_n(t),\overline{\alpha}_n(t))
      \nonumber
      \\
      &\leq E_{\varepsilon_n}(\underline{u}_n(s),\underline{\alpha}_n(s)), \mbox{ for }s,t\in[0,T];s\leq t,\mbox{ and }n\in\N.\label{energy3}
    \end{align}
Now, by virtue of \eqref{conv_1} \eqref{conv_2}, \eqref{convJ} and \eqref{energy3}, letting $n\rightarrow\infty$ yields that:
\begin{align}
    &\frac{1}{2}\int_{s}^{t}|\partial_t\alpha(\sigma)|^2_{L^2(\Omega)}\,d\sigma+\frac{\kappa}{2}\int_{s}^{t}|\nabla\partial_t\alpha(\sigma)|^2_{[L^2(\Omega)]^2}\,d\sigma+\int_{s}^{t}|\partial_tu(\sigma)|^2_{L^2(\Omega)}\,d\sigma
    \nonumber
    \\
    &\quad+{\mu}\int_{s}^{t}|\nabla\partial_tu(\sigma)|^2_{[L^2(\Omega)]^2}\,d\sigma +E(u(t), \alpha(t)) \leq E(u(s), \alpha(s)),
    \label{conv_34}
    \\
    & \mbox{ for a.e. $ s \in [0, T) $ including $s = 0$, and a.e. $ t \in (s, T) $.}
\end{align}
{In addition, we can replace the phrase ``a.e. $ t \in (s, T) $'' in \eqref{conv_34}  by ``for any $ t \in [s, T] $''. In fact, we consider a sequence $\{t_n\}_{n\in\N}\subset (t,T)$ such that $t_n \rightarrow t$, and 
\begin{align}
  &\frac{1}{2}\int_{s}^{t_n}|\partial_t\alpha(\sigma)|^2_{L^2(\Omega)}\,d\sigma+\frac{\kappa}{2}\int_{s}^{t_n}|\nabla\partial_t\alpha(\sigma)|^2_{[L^2(\Omega)]^2}\,d\sigma+\int_{s}^{t_n}|\partial_tu(\sigma)|^2_{L^2(\Omega)}\,d\sigma
  \label{conv_35}
  \\
  &\quad+{\mu}\int_{s}^{t_n}|\nabla\partial_tu(\sigma)|^2_{[L^2(\Omega)]^2}\,d\sigma +E(u(t_n), \alpha(t_n)) \leq E(u(s), \alpha(s)),\mbox{ for all $ n\in\N $.}
\end{align}
Having in mind the lower semi-continuity of $E(u,\alpha)$ on $[L^2(\Omega)]^2$ and the convergence $[u(t_n),\alpha(t_n)] \rightarrow [u(t),\alpha(t)]$ in $[L^2(\Omega)]^2$, taking the lower limit of both sides of \eqref{conv_35} yields that 
\begin{align}
  &\frac{1}{2}\int_{s}^{t}|\partial_t\alpha(\sigma)|^2_{L^2(\Omega)}\,d\sigma+\frac{\kappa}{2}\int_{s}^{t}|\nabla\partial_t\alpha(\sigma)|^2_{[L^2(\Omega)]^2}\,d\sigma+\int_{s}^{t}|\partial_tu(\sigma)|^2_{L^2(\Omega)}\,d\sigma
    \nonumber
    \\
    &\quad+{\mu}\int_{s}^{t}|\nabla\partial_tu(\sigma)|^2_{[L^2(\Omega)]^2}\,d\sigma +E(u(t), \alpha(t)) 
    \\
  \leq& \liminf_{n\rightarrow\infty}\Biggl(\frac{1}{2}\int_{s}^{t_n}|\partial_t\alpha(\sigma)|^2_{L^2(\Omega)}\,d\sigma+\frac{\kappa}{2}\int_{s}^{t_n}|\nabla\partial_t\alpha(\sigma)|^2_{[L^2(\Omega)]^2}\,d\sigma+\int_{s}^{t_n}|\partial_tu(\sigma)|^2_{L^2(\Omega)}\,d\sigma
  \nonumber
  \\
  &\quad+{\mu}\int_{s}^{t_n}|\nabla\partial_tu(\sigma)|^2_{[L^2(\Omega)]^2}\,d\sigma +E(u(t_n), \alpha(t_n)) \Biggr)
  \\
  \leq& E(u(s), \alpha(s)). \label{conv_101}
\end{align}
Thus, we can derive the energy-inequality \eqref{ene-inq1} as a straightforward consequence of \eqref{conv_101}.}

  Finally, we prove that {$0 \leq u\leq 1$ in $\overline{Q}$}. Let us put $\psi= u(t) + [-u(t)]^+$ in (S1). From (A1) and (A2), we can see that
  \begin{align}
    &\frac{1}{2}\frac{d}{dt}(|[-u(t)]^+|^2_{L^2(\Omega)}+\mu|\nabla [-u(t)]^+|^2_{[L^2(\Omega)]^2})
    +\lambda(u_\mathrm{org},[-u(t)]^+)_{L^2(\Omega)}
    \nonumber
    \\
    &\quad+\int_{\Omega}\gamma(R(\alpha(t))\nabla u(t))\,dx
    \leq\int_{\Omega}\gamma (R(\alpha(t))\nabla[-u(t)]^-)\,dx,\mbox{ a.e. } t \in (0,T).
    \label{ugeq01}
  \end{align}
Here, since
\begin{align*}
    \int_{\Omega}\gamma (R(\alpha(t))\nabla[-u(t)]^-)\,dx= \int_{\{u\geq0\}}\gamma (R(\alpha(t))\nabla u(t))\,dx \leq\int_{\Omega}\gamma (R(\alpha(t))\nabla u(t))\,dx,
\end{align*}
we obtain
    \begin{align}\label{-uve+}
      \frac{d}{dt}\bigl(|[-u(t)]^+|^2_{L^2(\Omega)}+\mu|\nabla [-u(t)]^+|^2_{[L^2(\Omega)]^2}\bigr)\leq0,\mbox{ a.e. } t \in (0,T).
    \end{align}
    By applying Gronwall's lemma to \eqref{-uve+}, we obtain that
    \begin{gather}
      |[-u(t)]^+|^2_{L^2(\Omega)}+\mu|\nabla [-u(t)]^+|^2_{[L^2(\Omega)]^2}\leq|[-u_0]^+|^2_{L^2(\Omega)}+\mu|\nabla [-u_0]^+|^2_{[L^2(\Omega)]^2}=0, 
      \\
        \mbox{ for all }t\in[0,T], \mbox{ which implies $ u \geq 0 $ a.e. in $ Q $.}\label{dai01}
    \end{gather}

    Secondly, putting $\psi= u\land 1$ in (S1) and noting that $u-(u\land1)=[u-1]^+$, we can see that
    \begin{align}
      &(\partial_tu(t),[u-1]^+(t))_{L^2(\Omega)}+\lambda(u(t)-u_\mathrm{org},[u-1]^+(t))_{L^2(\Omega)}
      \nonumber
      \\
      &\quad+\mu(\nabla\partial_tu(t),\nabla[u-1]^+(t))_{[L^2(\Omega)]^2}+\nu\int_{\Omega}|\nabla u(t)|^{p-2}\nabla u(t)\cdot\nabla[u-1]^+(t)\,dx
      \nonumber
      \\
      &\quad+\int_{\Omega}\gamma(R(\alpha(t))\nabla u(t))\,dx\leq\int_{\Omega}\gamma (R(\alpha(t))\nabla(u\land1)(t))\,dx.
      \label{u-101}
    \end{align}
    Here, since
    \begin{gather*}
      \int_{\Omega}\gamma (R(\alpha(t))\nabla(u\land1)(t))\,dx=\int_{\{u\leq1\}}\gamma (R(\alpha(t))\nabla u(t))\,dx\leq\int_{\Omega}\gamma (R(\alpha(t))\nabla u(t))\,dx,
        \\[1ex]
      \partial_tu=\partial_t(u-1),\mbox{ and }\nabla u=\nabla(u-1),
        \\[1ex]
      1-u_{\mathrm{org}}\geq0\mbox{ a.e. in }\Omega, \mbox{ i.e., }(1-u_{\mathrm{org}},[u-1]^+(t))_{L^2(\Omega)}\geq0,
        \\[1ex]
      \nu\int_{\Omega}|\nabla u(t)|^{p-2}\nabla u(t)\cdot\nabla[u-1]^+(t)\,dx\geq \nu\int_{\{u\geq1\}}|\nabla(u-1)|^p\,dx \geq0,
    \end{gather*}
    we obtain
    \begin{equation}\label{1-uve+}
      \frac{d}{dt}\bigl(|[u(t)-1]^+|_{L^2(\Omega)}^2+\mu|\nabla[u(t)-1]^+|_{[L^2(\Omega)]^2}^2\bigr)\leq0,\mbox{ a.e. } t \in (0,T).
    \end{equation}
    By applying Gronwall's lemma to \eqref{1-uve+}, we obtain that
    \begin{gather}
      |[u(t)-1]^+|_{L^2(\Omega)}^2+\mu|\nabla[u(t)-1]^+|_{[L^2(\Omega)]^2}^2\leq|[u_0-1]^+|_{L^2(\Omega)}^2+\mu|\nabla[u_0-1]^+|_{[L^2(\Omega)]^2}^2=0,
      \\
        \mbox{ for all }t\in[0,T], \mbox{ which implies $ u \leq 1 $ a.e. in $ Q $.}\label{dai02}
    \end{gather}
    {Combining \eqref{dai01}, \eqref{dai02}, and the continuity $u\in C(\overline{Q})$, we arrive at $0\leq u\leq1 $ in $\overline{Q}$.}

Thus, we complete the proof of Main Theorem \ref{mainThm1}. \qed
\begin{rem}
  We use the diagonal method involving $\varepsilon$ and $\tau$. However, if $\gamma \in C^{1,1}(\mathbb{R}^2) \cap C^2(\R^2)$, then we can choose a sequence $\{\tau_n\}_{n\in\N}$ that is independent of $\varepsilon$. In other words, the existence of solutions can be shown by a subsequence of $\tau$ alone.
\end{rem}

\pagebreak

\section{Proof of Main Theorem 2}

Let $ [ u_k , \alpha_k ] ( k = 1 , 2 ) $ denote the solution of the system (S) with the initial condition $ [ u_1 ( 0 ) , \alpha_1 ( 0 ) ] = [ u_2 ( 0 ) , \alpha_2 ( 0 ) ] = [ u_0 , \alpha_0 ] $. Let us take a difference between two variational identities for $ \alpha_k $ and substitute $ \varphi : = ( \alpha_1 - \alpha_2 ) ( t ) $. {From (A2) and the additional assumption $\gamma\in C^{1,1}(\R^2)\cap C^2(\R^2)$, we can see that}
\begin{align}
  & \frac{1}{2}\frac{d}{dt}\bigl(|(\alpha_1-\alpha_2)(t)|^2_{L^2(\Omega)}+\kappa |\nabla(\alpha_1-\alpha_2)(t)|^2_{[L^2(\Omega)]^2}\bigr)+|\nabla(\alpha_1-\alpha_2)(t)|^2_{[L^2(\Omega)]^2}
  \nonumber
  \\
  &\leq4|\nabla^2\gamma|_{L^\infty(\R^2;\R^{2\times2})}\int_\Omega|\nabla u_1(t)|^2|(\alpha_1-\alpha_2)(t)|^2\,dx
  \nonumber
  \\
  & \quad+|\nabla^2\gamma|_{L^\infty(\R^2;\R^{2\times2})}\int_\Omega|\nabla u_1(t)||\nabla(u_1-u_2)(t)||(\alpha_1-\alpha_2)(t)|\,dx
  \nonumber
  \\
  &\quad+4|\nabla\gamma|_{L^\infty(\R^2;\R^2)}\int_\Omega|\nabla u_1(t)||(\alpha_1-\alpha_2)(t)|^2\,dx
  \\
  &\quad+|\nabla\gamma|_{L^\infty(\R^2;\R^2)}\int_\Omega|\nabla(u_1-u_2)(t)||(\alpha_1-\alpha_2)(t)|\,dx
  \nonumber
  \\
  &=:I_1+I_2+I_3+I_4.
\label{mT2-0}
\end{align}
Here, let us denote by $C_{H^1}^{L^{\frac{2p}{p -2}}}$ and $C_{H^1}^{L^{\frac{2p}{p -1}}}$ the constants of two-dimensional Sobolev embeddings $H^1(\Omega) \subset L^{\frac{2p}{p-2}}(\Omega)$ and $H^1(\Omega) \subset L^{\frac{2p}{p-1}}(\Omega)$, respectively. By Young's inequality, the integral terms $I_j$, $ j = 1, 2, 3, 4 $, in \eqref{mT2-0}  can be estimated as follows:
\begin{align}
  I_1 &\leq4|\nabla\gamma|_{W^{1,\infty}(\R^2;\R^2)}|\nabla u_1(t)|^2_{[L^p(\Omega)]^2}|(\alpha_1-\alpha_2)(t)|^2_{L^{\frac{2p}{p-2}}(\Omega)}
  \nonumber
  \\
  &\leq4(C_{H^1}^{L^{\frac{2p}{p-2}}})^2|\nabla\gamma|_{W^{1,\infty}(\R^2;\R^2)}|\nabla u_1(t)|^2_{[L^p(\Omega)]^2}|(\alpha_1-\alpha_2)(t)|^2_{H^1(\Omega)},
  \label{mT2-1}
    \\[2ex]
  I_2&\leq|\nabla\gamma|_{W^{1,\infty}(\R^2;\R^2)}|\nabla u_1(t)|_{[L^p(\Omega)]^2} \cdot 
    \\
    & \qquad \qquad \cdot |\nabla(u_1-u_2)(t)|_{[L^2(\Omega)]^2}|(\alpha_1-\alpha_2)(t)|_{L^{\frac{2p}{p-2}}(\Omega)}
  \nonumber
  \\
  &\leq\frac{C_{H^1}^{L^{\frac{2p}{p-2}}}}{2}|\nabla\gamma|_{W^{1,\infty}(\R^2;\R^2)}|\nabla u_1(t)|_{[L^p(\Omega)]^2} \cdot 
    \\
    & \qquad \qquad \cdot \bigl(|\nabla(u_1-u_2)(t)|_{[L^2(\Omega)]^2}^2+|(\alpha_1-\alpha_2)(t)|_{H^1(\Omega)}^2\bigr),
    \label{mT2-2}
    \\[2ex]
  I_3&\leq4|\nabla\gamma|_{W^{1,\infty}(\R^2;\R^2)}|\nabla u_1(t)|_{[L^p(\Omega)]^2}|(\alpha_1-\alpha_2)(t)|^2_{L^{\frac{2p}{p-1}}(\Omega)}
  \nonumber
  \\
  &\leq4(C_{H^1}^{L^{\frac{2p}{p-1}}})^2|\nabla\gamma|_{W^{1,\infty}(\R^2;\R^2)}|\nabla u_1(t)|_{[L^p(\Omega)]^2}|(\alpha_1-\alpha_2)(t)|^2_{H^1(\Omega)},
  \label{mT2-3}
\end{align}
and
\begin{align}
    I_4 & \leq \frac{1}{2}|\nabla\gamma|_{W^{1,\infty}(\R^2;\R^2)}\bigl(|\nabla(u_1-u_2)(t)|_{[L^2(\Omega)]^2}^2+|(\alpha_1-\alpha_2)(t)|_{H^1(\Omega)}^2\bigr).
  \label{mT2-10}
\end{align}
\noeqref{mT2-0,mT2-1,mT2-2,mT2-3,mT2-10}
Combining \eqref{mT2-0}--\eqref{mT2-10}, we arrive at
\begin{align}
  & \frac{d}{dt}\bigl(|(\alpha_1-\alpha_2)(t)|^2_{L^2(\Omega)}+\kappa|\nabla(\alpha_1-\alpha_2)(t)|^2_{[L^2(\Omega)]^2}\bigr)
  \nonumber
  \\
  & \quad\leq C_1\bigl(1+|\nabla u_1(t)|_{[L^p(\Omega)]^2}\bigr)^2\bigl(|(u_1-u_2)(t)|^2_{L^2(\Omega)}+\mu|\nabla(u_1-u_2)(t)|^2_{[L^2(\Omega)]^2}
  \label{mT2_a}
  \\
  &\qquad\quad+|(\alpha_1-\alpha_2)(t)|^2_{L^2(\Omega)}+\kappa|\nabla(\alpha_1-\alpha_2)(t)|^2_{[L^2(\Omega)]^2}\bigr), \mbox{ a.e. } t \in (0,T),
  \nonumber
\end{align}
where 
\begin{gather*}
    C_1:=\frac{8\bigl((C_{H^1}^{L^{\frac{2p}{p-2}}}+1)^2+(C_{H^1}^{L^{\frac{2p}{p-1}}})^2\bigr)|\nabla\gamma|_{W^{1,\infty}(\R^2;\R^2)}}{1\land\kappa\land\mu}.
\end{gather*}

On the other hand, by putting $ \psi = u_2 $ in the variational inequality for $ u_1 $, and $ \psi = u_1 $ in the one for $ u_2 $, and then summing the two inequalities, we obtain:
\begin{align*}
  &\frac{1}{2}\frac{d}{dt}\bigl(|(u_1-u_2)(t)|^2_{L^2(\Omega)}+\mu|\nabla(u_1-u_2)(t)|^2_{[L^2(\Omega)]^2}\bigr)+\lambda|(u_1-u_2)(t)|^2_{L^2(\Omega)}
  \\
  &+\int_\Omega|\nabla u_1(t)|^{p-2}\nabla u_1(t)\cdot\nabla(u_1-u_2)(t)\,dx +\int_\Omega|\nabla u_2(t)|^{p-2}\nabla u_2(t)\cdot\nabla(u_2-u_1)(t)\,dx 
  \\
  &-\int_\Omega\left[{^\top}R(\alpha_1(t))\nabla\gamma(R(\alpha_1(t))\nabla u_1(t))-{^\top}R(\alpha_2(t))\nabla\gamma(R(\alpha_2(t))\nabla u_2(t))\right]\cdot
  \\
  & \qquad\cdot\nabla(u_1-u_2)(t)\,dx\leq0,\mbox{ a.e. }t\in(0,T).
\end{align*}
Also, by using (A2), Tartar's inequality for $p$-Laplace operator (cf. \cite[Lemma 2.1]{MR2802983}), and H\"older's inequality, we obtain that
\begin{align}
  & \frac{1}{2}\frac{d}{dt}\bigl(|(u_1-u_2)(t)|^2_{L^2(\Omega)}+\mu|\nabla(u_1-u_2)(t)|^2_{[L^2(\Omega)]^2}\bigr)
  \nonumber
  \\
  &\leq4|\nabla\gamma|_{L^\infty(\R^2;\R^{2})}\int_\Omega|\nabla(u_1-u_2)(t)||(\alpha_1-\alpha_2)(t)|\,dx 
  \nonumber
  \\
  &\quad+|\nabla^2\gamma|_{L^\infty(\R^2;\R^{2\times2})}\int_\Omega|\nabla u_1(t)||\nabla(u_1-u_2)(t)||(\alpha_1-\alpha_2)(t)|\,dx
  \nonumber
  \\
  &\quad+|\nabla^2\gamma|_{L^\infty(\R^2;\R^{2\times2})}|\nabla(u_1-u_2)(t)|^2_{[L^2(\Omega)]^2}
  \nonumber
  \\
  &=: I_5+I_6+|\nabla^2\gamma|_{L^\infty(\R^2;\R^{2\times2})}|\nabla(u_1-u_2)(t)|^2_{[L^2(\Omega)]^2}.
  \label{mT2-4}
\end{align}
Here, by using the two-dimensional Sobolev embedding $H^1(\Omega) \subset L^{\frac{2p}{p-2}}(\Omega)$, and Young's inequality, the integral terms $I_5$ and $I_6$ in \eqref{mT2-4} can be estimated as follows:
\begin{align}
  I_5&\leq 4|\nabla\gamma|_{W^{1,\infty}(\R^2;\R^2)}\bigl(|\nabla(u_1-u_2)(t)|_{[L^2(\Omega)]^2}^2+|(\alpha_1-\alpha_2)(t)|_{H^1(\Omega)}^2\bigr),
  \label{mT2-20}
    \\[2ex]
  I_6&\leq|\nabla\gamma|_{W^{1,\infty}(\R^2;\R^2)}|\nabla u_1(t)|_{[L^p(\Omega)]^2} \cdot 
    \\
    & \qquad \qquad \cdot |\nabla(u_1-u_2)(t)|_{[L^2(\Omega)]^2}|(\alpha_1-\alpha_2)(t)|_{L^{\frac{2p}{p-2}}(\Omega)}
  \nonumber
  \\
  &\leq \frac{C_{H^1}^{L^{\frac{2p}{p-2}}}}{2}|\nabla\gamma|_{W^{1,\infty}(\R^2;\R^2)}|\nabla u_1(t)|_{[L^p(\Omega)]^2} \cdot
    \\
    & 
    \qquad \qquad \cdot \bigl(|\nabla(u_1-u_2)(t)|_{[L^2(\Omega)]^2}^2+|(\alpha_1-\alpha_2)(t)|_{H^1(\Omega)}^2\bigr).
  \label{mT2-5}
\end{align}
\noeqref{mT2-4,mT2-20,mT2-5}
Combining \eqref{mT2-4}--\eqref{mT2-5}, we arrive at
\begin{align}
  &\frac{d}{dt}\bigl(|(u_1-u_2)(t)|^2_{L^2(\Omega)}+\mu|\nabla(u_1-u_2)(t)|^2_{[L^2(\Omega)]^2}\bigr)
  \nonumber
  \\
  &\quad\leq C_2\bigl(1+|\nabla u_1(t)|_{[L^p(\Omega)]^2}\bigr)\bigl(|(u_1-u_2)(t)|^2_{L^2(\Omega)}+\mu|\nabla(u_1-u_2)(t)|^2_{[L^2(\Omega)]^2}
  \nonumber
  \\
   &\qquad\qquad\quad+|(\alpha_1-\alpha_2)(t)|^2_{L^2(\Omega)}+\kappa|\nabla(\alpha_1-\alpha_2)(t)|^2_{[L^2(\Omega)]^2}\bigr),\mbox{ a.e. }t\in(0,T),\label{mT2_u}
\end{align}
where
\begin{gather*}
    C_2:=\frac{6\bigl(C_{H^1}^{L^{\frac{2p}{p-1}}}+1\bigr)|\nabla\gamma|_{W^{1,\infty}(\R^2;\R^{2})}}{1\land\kappa\land\mu}.
\end{gather*}
 By adding inequalities \eqref{mT2_a} and \eqref{mT2_u}, we obtain that
 \begin{align}
  \frac{ d }{ dt } J ( t ) \leq C_* \bigl(1 + |\nabla u_1(t)|_{[L^p(\Omega)]^2} \bigr)^2 J ( t ), \mbox{ a.e. } t \in(0,T),
  \label{mT2_f}
 \end{align}
 {where} $ C_* : = C_1 + C_2 $, and
 \begin{gather*}
  J(t):=|(u_1-u_2)(t)|^2_{L^2(\Omega)}+\mu|\nabla(u_1-u_2)(t)|^2_{[L^2(\Omega)]^2}+|(\alpha_1-\alpha_2)(t)|^2_{L^2(\Omega)}
  \\
  +\kappa|\nabla(\alpha_1-\alpha_2)(t)|^2_{[L^2(\Omega)]^2},\mbox{ for all } t \in [0,T].
 \end{gather*}
 By applying Gronwall's lemma to \eqref{mT2_f}, we conclude that
 \begin{gather*}
  J(t)\leq\exp(C_*T(1+|u_1|_{L^\infty(0,T;W^{1,p}_0(\Omega))})^2)J(0),
     ~ \mbox{ for all } t \in [0,T].
     \label{conclude01}
 \end{gather*}

 {
    Moreover, since the above \eqref{conclude01} implies the uniqueness of solution to (S), we will see that the energy-inequality \eqref{ene-inq1} will hold for all $ 0 \leq s \leq t \leq T $. Indeed, once the unique solution $ [u, \alpha] \in [L^2(0, T; L^2(\Omega))]^2 $ to (S) is obtained, and $ s \in [0, T) $ is taken arbitrary, the uniqueness of solution allows us to verify \eqref{ene-inq1} ``for a.e. $ \widetilde{s} \in (s, T) $ including $ \widetilde{s} = s $ and any $ t \in [\widetilde{s}, T] $.'' This is established by applying the time-discretization approach in Section 4, to the case when the initial data of (S) is given by $ [u(s), \alpha(s)] \in W_0^{1, p}(\Omega) \times H_0^1(\Omega) $. 
}
\medskip

 Thus, we conclude the Main Theorem \ref{mainThm2}. \qed

\paragraph{Acknowledgments.}{
This work is supported by  Grant-in-Aid for Scientific Research (C) No. 20K03672, JSPS, and JST SPRING No. JPMJSP2109. {The first and third authors are also partially supported by NSF grant DMS-2408877, Air Force Office of Scientific Research (AFOSR) under Award NO: FA9550-22-1-0248, and Office of Naval Research (ONR) under Award NO: N00014-24-1-2147.}
}
\pagebreak

\providecommand{\href}[2]{#2}
\providecommand{\arxiv}[1]{\href{http://arxiv.org/abs/#1}{arXiv:#1}}
\providecommand{\url}[1]{\texttt{#1}}
\providecommand{\urlprefix}{URL }

\end{document}